\newtheorem{theorem}{Theorem}
\newtheorem{lemma}[theorem]{Lemma}
\newtheorem{definition}[theorem]{Definition}
\newcommand {\p}{\partial}
\numberwithin{equation}{section}
\numberwithin{theorem}{section}
\begin{document}
\title  [Hessian equation]
{Hessian equations on closed Hermitian manifolds}
\author{Dekai Zhang}
\address{University of Science and Technology of China\\
         Hefei, 230026, Anhui Province, CHINA}
\email{dekzhang@mail.ustc.edu.cn}
\maketitle

\begin{abstract}
In this paper, using the technical tools in \cite{TW5}, we solve the complex Hessian equation on closed Hermitian manifolds, which generalizes the the K\"ahler case results  in \cite{HMW} and \cite{DK}.
\end{abstract}

\section{Introduction}

Let $(M,g)$ be a compact Hermitian manifold of complex dimension $n\geq2$, and $\omega$ be the corresponding Hermitian form. In local coordinates, we write $\omega$ as
\begin{center}
 $\omega=\sqrt{-1}\sum\limits_{i,j = 1}^n g_{i\overline j } dz^i
 \wedge  dz^{\overline j } .$
\end{center}
In this paper, we consider the following Hessian equation on closed Hermitian manifolds
\begin{align}\label{equation}
\left\{ {\begin{array}{*{20}c}
   {C^k_n{{\omega_u^k}\wedge \omega^{n-k} }= {e^f}\omega^n}, \ \ \mathop {\sup }   \limits_M u= 0   \\
   {\omega_u=\omega+\sqrt{-1}\p\bar\p u \in \Gamma_k(M),}  \\
\end{array}} \right.
\end{align}
where $\Gamma_k(M)$ is a convex cone defined in \eqref{kcone} in section 2.

When $k=n$, the condition $\omega_u \in \Gamma_k(M)$ is equivalent to $\omega_u > 0$.
 Equation \eqref{equation} becomes  the following  Monge-Ampere equation
\begin{align}\label{M-A}
 \omega_u^n= {e^f}\omega^n,  \ \ \mathop {\sup }   \limits_M u= 0.
\end{align}
In addition, when $\left(M,\omega\right)$ is a K\"ahler manifold, i.e., $d\omega=0$, Yau \cite{Y} solved the equation \eqref{M-A} now known as Calabi-Yau theorem. For general Hermitian manifolds, the equation \eqref{equation} has been solved by Cherrier \cite{C} in the case of dimensions 2 and Tosatti-Weinkove \cite{TW2} for arbitrary dimension. For further background, we refer the reader to \cite{TW1}, \cite{TW2},  \cite{GL}, \cite{Zh} and the references therein.

   When $2\le k \le n-1$, $\omega_u$ may not be positive, the analysis becomes more complicated. Suppose that $\left(M,\omega\right)$ is a K\"ahle manifold and $\omega_u \in \Gamma_k(M)$ which is defined in section 2 , Hou-Ma-Wu \cite{HMW} proved the following second order estimates of the equation \eqref{equation}
\begin{align}\label{C2estimate}
  \max|\p\bar\p u|_g\leq C(1+\max|\nabla u|^2_g).
\end{align}
 They also pointed out in their paper that \eqref{C2estimate} may be adapted to the blowing up analysis. Later on, Dinew-Kolodziej \cite{DK} obtained the gradient estimate by \eqref{C2estimate}. Thus equation \eqref{equation} can be solved on K\"ahler manifolds under the compatible condition
\begin{center}
$\int_M{e^f \omega ^n }  = \int_M {\omega ^n }.$
\end{center}

Tosatti-Weinkove \cite{TW4} considered another Hessian typed equation related to the Gauduchon conjecture
\begin{align}\label{TWequation}
&\det\left({\omega_0}^{n-1}+{\sqrt{-1}{\partial\bar{\partial}u}}
\wedge\omega^{n-2}\right)=e^{F}\det\left(\omega^{n-1}\right)\\
&{\omega_0}^{n-1}+{\sqrt{-1}{\partial\bar{\partial}u}}
\wedge\omega^{n-2}>0,  \mathop {\sup }   \limits_M u= 0 , \notag
\end{align}
where $\omega_0$ and $\omega$ are two Hermitian metrics on $M$.

In \cite{TW4}, Tosatti-Weinkove  solved equation \eqref{TWequation} if $\omega$ is K\"ahler. One of the main parts is doing the second order estimate.  They use the similar auxiliary function in \cite{HMW}. Later on, in \cite{TW5}, they can solve \eqref{TWequation} if  $\omega$ is  Hermitian. The second order estimate becomes more difficult in the Hermitian case, the authors succeeded to obtain the second order estimates by modifying the auxiliary function in \cite{HMW}.

 In this paper, we solve  equation \eqref{equation} on closed Hermitian manifolds. More precisely, our main result is
\begin{theorem}
\ Let $(M,g)$ be a closed Hermitian manifold of complex dimension
 $n\geq2$, $f$ is a smooth real  function on $M$.
Then there is a unique real number $b$ and a unique smooth real function $u$
 on $M$ solving
\begin{align}\label{mainresult}
    &C^k_n{{\omega_u^k}\wedge \omega^{n-k} }= {e^{f+b}}\omega^n\\
 &\omega_u \in \Gamma_k(M), \ \mathop {\sup }\limits_M u = 0 .\notag
\end{align}
\end{theorem}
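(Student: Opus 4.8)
The plan is to solve \eqref{mainresult} by the continuity method, letting the constant $b$ play the role of the cokernel of the linearised operator; this is what replaces the Calabi--Yau compatibility condition, which is unavailable on a non-K\"ahler manifold. With the normalisation of Section~2 one has $C^k_n\omega_u^k\wedge\omega^{n-k}/\omega^n=\sigma_k(\lambda(\omega_u))$, the $k$-th elementary symmetric function of the eigenvalues of $\omega_u$ with respect to $\omega$; in particular $C^k_n\omega^k\wedge\omega^{n-k}=e^{b_0}\omega^n$ with $b_0=\log\binom nk$, and since $\omega\in\Gamma_k(M)$ (it is positive) the pair $(u,b)=(0,b_0)$ solves the equation obtained by replacing $f$ with $0$. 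For $t\in[0,1]$ I would study
\begin{align}\label{cm}
C^k_n\,\omega_{u_t}^k\wedge\omega^{n-k}=e^{tf+b_t}\omega^n,\qquad \omega_{u_t}\in\Gamma_k(M),\qquad \sup_M u_t=0,
\end{align}
and set $S=\{t\in[0,1]:\eqref{cm}\text{ admits a smooth solution }(u_t,b_t)\}$, so that $0\in S$ and it suffices to prove that $S$ is open and closed.

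For openness, at $t_0\in S$ I would linearise $\Phi(u,b)=\log\bigl(C^k_n\omega_u^k\wedge\omega^{n-k}/\omega^n\bigr)-t_0 f-b$ on $C^{2,\a}(M)\times\R$; its differential at $(u_{t_0},b_{t_0})$ is $(v,s)\mapsto Lv-s$, where $L$ is a second-order linear elliptic operator (ellipticity because $\omega_{u_{t_0}}\in\Gamma_k$, so the Newton tensor $F^{i\bar j}=\p\log\sigma_k/\p a_{i\bar j}$ is positive definite) with no zeroth-order term. By the strong maximum principle $\ker L$ consists of the constants, so $L$ is Fredholm of index zero with one-dimensional kernel and cokernel, and the extra $-s$ is exactly what is needed to account for that cokernel: $(v,s)\mapsto Lv-s$ is then surjective with kernel $\{(\mathrm{const},0)\}$. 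Quotienting by this additive constant — which changes neither $\omega_u$ nor the equation — the implicit function theorem produces solutions for $t$ near $t_0$, which remain admissible since $\Gamma_k(M)$ is open; hence $S$ is open.

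For closedness one needs a bound $\|u_t\|_{C^{2,\a}(M)}+|b_t|\le C$ uniform in $t$. Evaluating \eqref{cm} at a maximum point of $u_t$ (there $\sqrt{-1}\p\bar\p u_t\le 0$, so $\lambda(\omega_{u_t})\le(1,\dots,1)$ and, by monotonicity of $\sigma_k$ on $\Gamma_k$, the left side is $\le\binom nk$) and at a minimum point gives $|b_t|\le C$. The $C^0$ bound $\inf_M u_t\ge -C$ follows from a now-standard $L^\infty$ estimate for Hessian-type equations on Hermitian manifolds (the Alexandrov--Bakelman--Pucci argument, which is local and uses only the equation). The crux is the second-order estimate
\[
\max_M|\p\bar\p u_t|_g\le C\bigl(1+\max_M|\na u_t|_g^2\bigr):
\]
this is the Hou--Ma--Wu maximum-principle computation \cite{HMW}, but carried out on a non-K\"ahler background, where the torsion and curvature of $\omega$ generate extra third-order and lower-order terms; to absorb them I would replace the K\"ahler auxiliary function of \cite{HMW} by the modified one of Tosatti--Weinkove \cite{TW5}, and I expect this to be the main obstacle. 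Granting it, the gradient bound $\max_M|\na u_t|_g\le C$ is obtained by the blow-up argument of Dinew--Kolodziej \cite{DK}: were it to fail, rescaling about points of maximal gradient and using the $C^0$ bound would produce in the limit a nonconstant entire solution of the Hessian equation on $\mathbb{C}^n$ with bounded gradient, contradicting the relevant Liouville theorem. Combining these gives a uniform $C^2$ bound; writing the equation as $\sigma_k^{1/k}(\lambda(\omega_{u_t}))=e^{(tf+b_t)/k}$, which is concave in $\p\bar\p u_t$, the Evans--Krylov theorem and then Schauder estimates upgrade this to uniform $C^{2,\a}$ and $C^\infty$ bounds. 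Hence $S$ is closed, so $S=[0,1]$, and $t=1$ yields the desired $(u,b)$.

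Finally, for uniqueness, if $(u,b)$ and $(v,c)$ both solve \eqref{mainresult}, evaluating the two equations at a maximum point of $u-v$ gives $e^{f+b}\le e^{f+c}$ there, hence $b\le c$, and by symmetry $b=c$. Then $u-v$ satisfies a linear elliptic equation with no zeroth-order term (concavity of $\log\sigma_k^{1/k}$ along the segment from $\omega_v$ to $\omega_u$), so it is constant by the strong maximum principle, and $\sup_M u=\sup_M v=0$ forces $u=v$.
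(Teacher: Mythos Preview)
Your overall strategy --- continuity method, openness via the implicit function theorem with the constant $b$ absorbing the one-dimensional cokernel, closedness via a priori estimates culminating in the Hou--Ma--Wu second-order inequality adapted with the Tosatti--Weinkove auxiliary function of \cite{TW5}, then the Dinew--Kolodziej blow-up for the gradient and Evans--Krylov for $C^{2,\alpha}$ --- is exactly the paper's, and your handling of openness, the bound on $b_t$, and uniqueness is correct and in fact more detailed than what the paper writes out.

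The gap is the $C^0$ estimate. You assert it ``follows from a now-standard $L^\infty$ estimate for Hessian-type equations on Hermitian manifolds (the Alexandrov--Bakelman--Pucci argument, which is local and uses only the equation).'' That works for $k=n$ because ABP measures the contact set by $\det(\partial\bar\partial u)$, which the Monge--Amp\`ere equation bounds. For $k<n$ the form $\omega_u\in\Gamma_k(M)$ may have negative eigenvalues, so $\sigma_n$ is not controlled by $\sigma_k$ and no such ABP argument is available; at the time the paper was written there was no ``standard'' $L^\infty$ estimate for the complex $k$-Hessian equation on Hermitian manifolds. The paper devotes all of Section~3 to this bound, by a different route: it proves a Cherrier-type inequality
\[
\int_M\bigl|\partial e^{-pu/2}\bigr|_g^2\,\omega^n\le Cp\int_M e^{-pu}\,\omega^n\qquad(p\ge p_0),
\]
after which the Moser-iteration scheme of \cite{TW2} yields $\sup_M|u|\le C$. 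Establishing that inequality for $k<n$ is itself substantial, precisely because $\omega_u$ is not positive: the successive integrations by parts produce torsion terms of the form $\int e^{-pu}\sqrt{-1}\partial u\wedge\bar\partial u\wedge\omega_u^{\,i}\wedge T_i$, and absorbing these into the good term $A=p\int e^{-pu}\sqrt{-1}\partial u\wedge\bar\partial u\wedge\sum_i\omega_u^{\,i-1}\wedge\omega^{n-i}$ requires the pointwise algebraic inequalities for elementary symmetric functions proved in Section~2 (Lemma~\ref{inequality} and the lemma preceding it, which control $|\lambda_{j_1}\cdots\lambda_{j_i}|$ by $\sigma_i(\lambda|j)$ on $\Gamma_k$). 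This is one of the two main technical contributions of the paper, alongside the second-order estimate you correctly flag as the main obstacle; you should not expect to bypass it.
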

We use the continuity method to solve the problem \eqref{mainresult}. The openness follows from implicit function theory. The closeness argument can be reduced to \emph{a priori} estimates up to the second by the standard Evans-Krylov theory. Actually, we can derive the zero order estimate and the second order estimate of  solutions of equation
\eqref{equation} and thus use a blow up method to obtain the gradient estimate.

     In \cite{TW2}, Tosatti-Weinkove derived the key zero order estimate by proving  a Cherrier-type inequality which was originally proved in \cite{C}. For equation \eqref{equation}, we can prove the similar Cherrier-type inequality but the analysis becomes a bit complicated since $\omega_u$ may not be positive. Some inequalities for $k-$th elementary symmetric functions in \cite{CW} are needed. For the second order estimate, the main difficulty is that there are new terms of the form $T\ast D^3u$, where $T$ is the torsion of $\omega$ and $ D^3u$ represents the third derivatives of $u$. To control these terms, we use the auxiliary function due to Tosatti-Weinkove in \cite{TW5}. The main difference is that for equation \eqref{equation} we need to use some lemmas for $k-$th elementary symmetric functions proved by Hou-Ma-Wu in \cite{HMW}.

The rest of the paper is organized as follows. In section $2$, we give some preliminaries. In section $3$, the Cherrier-type inequality is derived , thus we obtain the $C^0$ estimate. In section $4$, we will prove the second order estimate by a similar auxiliary function in \cite{TW5}.

{\bf Acknowledgment:}
I would like to thank Professor Xinan Ma for his encouragement, advice and comments. I also thank Professor Shengli Kong for careful reading and many suggestions.

\section{preliminaries}
Let $(M,g)$ be a compact Hermitian manifold and let $\nabla$ denote the Chern connection of $g$. In this section we will give some  preliminaries about the $k-$th elementary symmetric function and  the commutation formula of covariant derivatives.

\subsection{Elementary symmetric function}
The $k-$th elementary symmetric function is defined by
\begin{align}
\sigma _k \left( \lambda  \right) = \sum\limits_{1 \le i_1  <  \cdots  < i_k  \le n} {\lambda _{i_1 }  \cdots \lambda _{i_k } },\notag
\end{align}
where $\lambda  = \left( {\lambda _1 , \cdots ,\lambda _n } \right)\in R^n.$
 Let $\lambda \left( {a_{i\bar j } } \right)$ denote the eigenvalues of Hermitian matrix $\left\{ {a_{i\bar j } } \right\}$, we define
\begin{align}
\sigma _k \left( {a_{i\bar j } } \right) = \sigma _k \left( {\lambda \left\{ {a_{i\bar j } } \right\}} \right).\notag
\end{align}
The definition of $\sigma _k$ can be naturally extended to Hermitian manifold. Indeed, let $A^{1,1}(M,R)$ be the space of smooth real $(1,1)$-forms on $M$, for $\chi\in A^{1,1}(M,R)$ we define
\begin{align}
\sigma _k \left( \chi  \right) = \left( {\frac{n}{k}} \right)\frac{{\chi ^k \wedge \omega ^{n - k}    }}{{\omega ^n }}.\notag
\end{align}
\begin{definition}
\begin{align}
\Gamma_k:=\{\lambda\in \mathbb{R}^{n}:\sigma_j(\lambda)>0,j=1,\cdots,k\}.
\end{align}
Similarly, we define $\Gamma_k$ on $M$ as follows
\begin{align}\label{kcone}
\Gamma_k(M):=\{\chi\in A^{1,1}\left(M,\mathbb{R}^{n}\right):\sigma_j(\chi)>0,j=1,\cdots,k\}.
\end{align}
\end{definition}
Furthermore,  $\sigma_r(\lambda|i_1\ldots i_l)$, with $i_1,\ldots, i_l$ being distinct, stands for the $r$--th symmetric function with $\lambda_{i_1} = \cdots = \lambda_{i_l} = 0$. For more details about  elementary symmetric functions, one can see the lecture notes \cite{Wang}.

\ To prove the $C^0$ estimate, we need the following lemma of elementary symmetric functions.

\begin{lemma} Suppose that $ \lambda  \in \Gamma _k , 3\le k \le n$ and $ {\lambda _1 \ge\lambda _2 \ge \cdots \ge\lambda _n } $, then there exists a positive constant $C$ depending only on $k$ and $n$, such that for $0\le i \le k-2.$
\begin{align}\label{lemma2.1}
 \left| {\lambda _{j_1 } \lambda _{j_2 }  \cdots \lambda _{j_i } } \right|\le C\sigma _i \left( {\lambda \left| j \right.} \right),\\
  1 \le j_1  < j_2  <  \cdots j_i  \le n,j_l  \ne j,1 \le l \le i,1 \le j \le n.\notag
\end{align}
\end{lemma}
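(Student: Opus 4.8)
The plan is to reduce the claimed inequality to a statement about a single variable $\lambda_j$ by Newton–MacLaurin–type manipulations, exploiting the ordering $\lambda_1 \ge \cdots \ge \lambda_n$ together with $\lambda \in \Gamma_k$. First I would recall the basic structural facts about $\Gamma_k$: if $\lambda \in \Gamma_k$ then $\lambda_1 > 0$, and more generally $\sigma_i(\lambda|j) > 0$ for all $0 \le i \le k-1$ and all $j$, since deleting a variable and restricting to a lower symmetric function preserves membership in the relevant cone (a G\r{a}rding-cone monotonicity argument, or the identity $\sigma_i(\lambda) = \sigma_i(\lambda|j) + \lambda_j \sigma_{i-1}(\lambda|j)$ combined with $\lambda_1 > 0$). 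This positivity is what makes the right-hand side of \eqref{lemma2.1} a genuine bound rather than something that could vanish.

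Next I would distinguish cases according to the sign pattern of the indices $j_1 < \cdots < j_i$. If all of $\lambda_{j_1}, \ldots, \lambda_{j_i}$ are nonnegative, then because the indices are distinct from $j$ and the sequence is decreasing, each product $\lambda_{j_1}\cdots\lambda_{j_i}$ is one of the (nonnegative) monomials appearing in the sum defining $\sigma_i(\lambda|j)$, so in fact $\lambda_{j_1}\cdots\lambda_{j_i} \le \sigma_i(\lambda|j)$ with $C=1$; the subtlety is only that some $\lambda_{j_l}$ may be negative. In that case the key observation is that the number of negative eigenvalues is controlled: since $\lambda \in \Gamma_k$ with $k \ge 3$, one has $\sigma_1(\lambda) > 0$ and $\sigma_2(\lambda) > 0$, which forces $\lambda_k > 0$ (a standard fact: in $\Gamma_k$ at most $k-1$ of the eigenvalues can be $\le 0$, indeed $\lambda_1 \ge \cdots \ge \lambda_{k} > 0$). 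Hence for $i \le k-2$ a product $\lambda_{j_1}\cdots\lambda_{j_i}$ avoiding index $j$ involves at most one ``large negative'' block, and I would bound $|\lambda_{j_1}\cdots\lambda_{j_i}|$ above by a constant times $\lambda_1\cdots\lambda_i$ — using that the negative $\lambda$'s are the small-index ones and that $|\lambda_n| \le |\lambda_1| + \cdots$ is controlled by $\sigma_1$-type quantities — and then bound $\lambda_1 \cdots \lambda_i$ below-comparison against $\sigma_i(\lambda|j)$. For the latter, since $\lambda_1 \ge \cdots \ge \lambda_i > 0$ and all but at most one index among $1,\dots,i+1$ differs from $j$, the monomial $\lambda_1\cdots\lambda_i$ (or $\lambda_1\cdots\widehat{\lambda_m}\cdots\lambda_{i+1}$ if $j \le i$) is one of the $\binom{n-1}{i}$ terms of $\sigma_i(\lambda|j)$, but those other terms could be negative, so instead I would invoke the Hou–Ma–Wu/Newton inequalities relating $\sigma_i(\lambda|j)$ to $\sigma_1(\lambda|j)^i$ or to the dominant product of positive eigenvalues.

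The cleanest route, and the one I would actually carry out, is: (i) show $\lambda_1 \ge \cdots \ge \lambda_k > 0$; (ii) observe that for $i \le k-2$ the product $\lambda_{j_1}\cdots\lambda_{j_i}$ either is a product of positive terms (done, $C=1$) or contains exactly one factor from $\{\lambda_{k+1},\dots,\lambda_n\}$ which are the only possibly-negative ones — wait, that is not quite right since negatives are the *small* indices $\lambda_{n}$; since $\lambda_1 \ge \cdots$, the possibly-negative ones are $\lambda_{k+1}, \ldots, \lambda_n$ — so at most $i \le k-2$ of them, hence at least two positive ``slots'' $\lambda_1, \lambda_2$ are untouched; (iii) use $|\lambda_{j_l}| \le |\lambda_n| \le C \lambda_1$ for negative factors, which follows from $\sigma_1(\lambda) > 0 \Rightarrow |\lambda_n| \le (n-1)\lambda_1$ after noting $\lambda_n \ge -\sum_{m<n}\lambda_m \ge -(n-1)\lambda_1$; (iv) conclude $|\lambda_{j_1}\cdots\lambda_{j_i}| \le C \lambda_1 \cdots \lambda_i \le C \lambda_1^{\,i}$, and finally (v) compare $\lambda_1^i$ with $\sigma_i(\lambda|j)$ using that $\lambda_1 \ge \cdots \ge \lambda_k > 0$ gives $\sigma_i(\lambda|j) \ge \sigma_i(\lambda_2, \ldots, \lambda_{i+1}) \ge c\, \lambda_{i+1}^{\,i}$ when $j=1$, and handle the possibility $\lambda_{i+1}$ small by a separate sub-case where $\sigma_i(\lambda|j)$ is instead compared to $\sigma_i$ of the top positive eigenvalues via the Newton inequality $\sigma_{i}(\lambda|j)\,\sigma_{i-2}(\lambda|j) \le c\,\sigma_{i-1}(\lambda|j)^2$.

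I expect the main obstacle to be step (v): controlling $\sigma_i(\lambda|j)$ \emph{from below} by a power of the leading eigenvalue when the eigenvalues of index $2, \ldots, i+1$ may themselves be small or negative. This cannot be done monomial-by-monomial and genuinely requires the G\r{a}rding-cone / Newton-inequality machinery for $k$-th elementary symmetric functions; the point $i \le k-2$ (rather than $k-1$) is exactly what guarantees enough ``room'' in the cone for these inequalities to give a nondegenerate lower bound, and pinning down the right Hou–Ma–Wu lemma to quote is where the argument has to be done carefully.
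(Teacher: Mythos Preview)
Your overall strategy---bound $|\lambda_{j_1}\cdots\lambda_{j_i}|$ above by a product of the top positive eigenvalues, then bound $\sigma_i(\lambda|j)$ below by the same product---is exactly the paper's strategy. However, two of your steps do not go through as written, and the ``easy'' case is not actually easy.

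First, the all-positive case: you claim that if every $\lambda_{j_l}>0$ then $\lambda_{j_1}\cdots\lambda_{j_i}\le\sigma_i(\lambda|j)$ with $C=1$ because the left side is one monomial in the sum. That fails: the other monomials in $\sigma_i(\lambda|j)$ can be negative, so being a single summand gives no bound. This case needs the same lower bound on $\sigma_i(\lambda|j)$ as the general case.

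Second, your step~(iv) asserts $|\lambda_{j_1}\cdots\lambda_{j_i}|\le C\,\lambda_1\cdots\lambda_i$, but the only input from~(iii) is $|\lambda_p|\le (n-1)\lambda_1$ for negative $\lambda_p$. That bound gives at best $|\lambda_{j_1}\cdots\lambda_{j_i}|\le C\,\lambda_1^{\,i}$, and $\lambda_1^{\,i}$ can be vastly larger than $\lambda_1\cdots\lambda_i$ when the intermediate eigenvalues are small, so the chain breaks. The fix is to use the sharper inequality $|\lambda_p|\le (n-k)\lambda_k$ for $p\ge k+1$, which follows from $\sigma_1(\lambda|1\,2\cdots k{-}1)=\lambda_k+\cdots+\lambda_n>0$ (itself a consequence of $\lambda\in\Gamma_k$). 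With negative factors bounded by $C\lambda_k$ rather than $C\lambda_1$, and since $i\le k-2$ forces $\lambda_k\le\lambda_{q+1},\ldots,\lambda_i$, you do recover $|\lambda_{j_1}\cdots\lambda_{j_i}|\le C\,\lambda_1\cdots\lambda_i$ (or $C\,\lambda_1\cdots\widehat{\lambda_j}\cdots\lambda_{i+1}$ when $j\le i$).

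Third, for step~(v) the Newton--MacLaurin inequalities run in the wrong direction: they bound $\sigma_i$ \emph{above} by powers of $\sigma_1$, not below. What is actually needed---and what the paper invokes from Wang's lecture notes (ultimately Lin--Trudinger and Chou--Wang)---is the cone inequality
\[
\sigma_i(\lambda|j)\ \ge\ \theta(n,k)\,\lambda_1\cdots\lambda_i\quad(j>i),\qquad
\sigma_i(\lambda|j)\ \ge\ \theta(n,k)\,\lambda_1\cdots\widehat{\lambda_j}\cdots\lambda_{i+1}\quad(j\le i),
\]
valid for $\lambda\in\Gamma_k$ and $i\le k-1$. This is precisely the ``room in the cone'' you allude to at the end; once you quote it, and combine it with the corrected upper bound from the previous paragraph, the lemma follows immediately. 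So your plan is right in outline, but steps (iii)--(v) need the two specific ingredients above rather than the cruder $\sigma_1>0$ bound and generic Newton inequalities.
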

\begin{proof}
Since
\[
\sum\limits_{p = k}^n {\lambda _p } {\rm{ = }}\sigma _{\rm{1}} \left( {\lambda \left| {{\rm{12}} \cdots k - 1} \right.} \right) > 0,
\]
and \[ {\lambda _1 \ge\lambda _2 \ge \cdots \ge\lambda _n } \]
then \begin{align}\label{basicinequality}
\left| {\lambda _p } \right| \le \left( {n - k} \right)\lambda _k , k + 1 \le p \le n.\end{align}
\ We first prove the lemma for $k=3$. In this case, it needs to prove that there exists a constant $C$ such that
\[|\lambda_l| \le C\sigma _1 \left( {\lambda \left| j \right.} \right),\] for $1 \le j,l \le n$ and $l  \ne j$. Indeed,
$\sigma _1 \left( {\lambda \left| j \right.} \right)=\lambda_l+\sigma _1 \left( {\lambda \left| jl \right.} \right)$, thus $\lambda_l\le\sigma _1 \left( {\lambda \left| j \right.} \right)$. Now,we assume $\lambda_l<0$, then $l\ge4$. By \eqref{basicinequality}, we have
\[\left| {\lambda _l } \right| \le \left( {n - 3} \right)\lambda _3 , 4 \le l \le n.\] Since $\lambda|j\in \Gamma_2$, by the proof in \cite{CW} which used the result in \cite{LT},  there exists a constant $\theta_1$ such that  $\sigma _1 \left( {\lambda \left| j \right.} \right)\ge \theta_1\lambda_2$ if $j=1$ and $\sigma _1 \left( {\lambda \left| j \right.} \right)\ge \theta_1\lambda_1$ if $2\le j \le n$. Taking $C=\frac{n-3}{\theta_1}$, we then prove the lemma for the case $k=3$.

Next we prove the lemma for the general $k,3\le k\le n$.

If $j > i$,  by the result in \cite{Wang}
\[\sigma _i \left( {\lambda \left| j \right.} \right) \ge \theta \left( {n,k} \right)\lambda _1  \cdots  \cdots \lambda _i.\]

Thus we have
\begin{align*}
 \left| {\lambda _{j_1 } \lambda _{j_2 }  \cdots \lambda _{j_i } } \right| =& \lambda _{j_1 }  \cdots \lambda _{j_q } \left| {\lambda _{j_q  + 1}  \cdots \lambda _{j_i } } \right| \le \lambda _1  \cdots \lambda _q \left( {n - k} \right)^{i - q} \lambda _k ^{i - q}  \\
  \le& \left( {n - k} \right)^i \lambda _1  \cdots \lambda _i  \le \frac{{\left( {n - k} \right)^i }}{{\theta \left( {n,k} \right)}}\sigma _i \left( {\lambda \left| j \right.} \right) .
 \end{align*}

If $j \le i$, then similarly
\[\sigma _i \left( {\lambda \left| j \right.} \right) \ge \theta \left( {n,k} \right)\lambda _1  \cdots \lambda _{j - 1} \lambda _{j + 1}  \cdots \lambda _{i + 1}.\]

 Thus we have

\begin{align*}
 \left| {\lambda _{j_1 } \lambda _{j_2 }  \cdots \lambda _{j_i } } \right| =& \lambda _{j_1 }  \cdots \lambda _{j_q } \left| {\lambda _{j_q  + 1}  \cdots \lambda _{j_i } } \right| \le \lambda _{1 }  \cdots \lambda _{j - 1} \lambda _{j + 1}  \cdots \lambda _{q + 1 } \left( {n - k} \right)^{i - q} \lambda _k ^{i - q}  \\
  \le& \left( {n - k} \right)^i \lambda _1  \cdots \lambda _{j - 1} \lambda _{j + 1}  \cdots \lambda _{i + 1}  \le \frac{{\left( {n - k} \right)^i }}{{\theta \left( {n,k} \right)}}\sigma _i \left( {\lambda \left| j \right.} \right) .
 \end{align*}
\end{proof}
Using this lemma, we immediately obtain the following lemma which is a key ingredient for proving lemma \ref{Ch-Ineq}.
\begin{lemma}\label{inequality}
\begin{align}\label{lemma2.2}
\sum\limits_{i = 0}^{k - 2} {\left| {\frac{{\sqrt { - 1} \partial u \wedge \bar \partial u \wedge \omega _u ^i  \wedge T_i }}{{\omega ^n }}} \right|}  \le C\sum\limits_{i = 0}^{k - 2} {\frac{{\sqrt { - 1} \partial u \wedge \bar \partial u \wedge \omega _u ^i  \wedge \omega ^{n - i - 1} }}{{\omega ^n }}},
\end{align}
,where $T_i$ is defined as the combinations of $\omega,\partial \omega, \partial\bar\partial \omega$, more precisely
\begin{align*}
T_i  = \sum\limits_{0 \le 3p + 2q \le n - i} {\omega ^{n - i - 3p - 2q}  \wedge (\sqrt { - 1})^p\left( {\partial \omega } \right)^p  \wedge \left( {\bar \partial \omega } \right)^p  \wedge (\sqrt { - 1})^q \left( {\partial \bar \partial \omega } \right)^q }
\end{align*}
\end{lemma}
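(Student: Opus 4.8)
The plan is to reduce everything to Lemma 2.1 (the algebraic inequality $|\lambda_{j_1}\cdots\lambda_{j_i}|\le C\,\sigma_i(\lambda|j)$) by expanding both sides of \eqref{lemma2.2} pointwise in a frame that simultaneously diagonalizes $\omega$ and $\omega_u$. First I would fix a point $p$ and choose local coordinates so that at $p$ we have $\omega = \sqrt{-1}\sum dz^i\wedge d\bar z^i$ and $\omega_u = \sqrt{-1}\sum \lambda_i\, dz^i\wedge d\bar z^i$, where $\lambda = (\lambda_1,\dots,\lambda_n)$ are the eigenvalues of $\omega_u$ with respect to $\omega$; we may assume $\lambda_1\ge\cdots\ge\lambda_n$. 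Since $\omega_u\in\Gamma_k(M)$, we have $\lambda\in\Gamma_k$, so Lemma 2.1 applies with this $\lambda$.

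The key observation is that each wedge product appearing on the left of \eqref{lemma2.2} is, after contracting with $\sqrt{-1}\,\partial u\wedge\bar\partial u$, a sum over index sets of products of at most $i$ of the $\lambda_j$'s times bounded coefficients coming from $\omega$, $\partial\omega$, $\partial\bar\partial\omega$ (these coefficients are controlled by the background geometry, hence by a uniform constant $C$ independent of $u$). Concretely, writing $\sqrt{-1}\,\partial u\wedge\bar\partial u = \sqrt{-1}\sum u_j\bar u_l\, dz^j\wedge d\bar z^l$, the quotient $\frac{\sqrt{-1}\,\partial u\wedge\bar\partial u\wedge\omega_u^i\wedge T_i}{\omega^n}$ expands, via the expression for $T_i$ as combinations of $\omega,\partial\omega,\partial\bar\partial\omega$, into a finite sum of terms of the form $(\text{bounded})\cdot|u_j|^2\cdot\lambda_{j_1}\cdots\lambda_{j_i}$ with $j_1<\cdots<j_i$ all distinct from $j$ (the index $j$ being "used up" by the $dz^j\wedge d\bar z^j$ slot). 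Here the point is that $T_i$ contributes exactly $n-i-1$ holomorphic and $n-i-1$ antiholomorphic one-forms besides the ones absorbed by $\partial u\wedge\bar\partial u$, so after wedging with $\omega_u^i$ and dividing by $\omega^n$ one is left with degree-$i$ monomials in the $\lambda$'s. Applying Lemma 2.1 to each such monomial gives $|\lambda_{j_1}\cdots\lambda_{j_i}|\le C\sigma_i(\lambda|j)$, and $\sigma_i(\lambda|j)$ is precisely (up to a combinatorial constant) the coefficient of $|u_j|^2$ in $\frac{\sqrt{-1}\,\partial u\wedge\bar\partial u\wedge\omega_u^i\wedge\omega^{n-i-1}}{\omega^n}$. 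Summing over $j$ and over $i$ from $0$ to $k-2$ then yields \eqref{lemma2.2}.

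The main obstacle I anticipate is purely bookkeeping: one must verify that in the expansion of $\sqrt{-1}\,\partial u\wedge\bar\partial u\wedge\omega_u^i\wedge T_i$ only the \emph{diagonal} terms $|u_j|^2$ survive (the off-diagonal $u_j\bar u_l$ with $j\ne l$ cancel because the remaining factors $\omega_u^i\wedge T_i$ are real $(n-i-1,n-i-1)$-forms built from diagonalized data, so they contribute only the complementary diagonal volume form), and that the surviving coefficient is genuinely a degree-$i$ elementary-symmetric-type expression in $\{\lambda_m : m\ne j\}$ to which Lemma 2.1 applies term by term. Once this combinatorial identification is in place, the estimate is immediate. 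I would also note that the constant $C$ depends only on $n$, $k$, and the fixed background metric $\omega$ (through $\partial\omega$ and $\partial\bar\partial\omega$), not on $u$, which is exactly what is needed for the later $C^0$ estimate in Lemma \ref{Ch-Ineq}.
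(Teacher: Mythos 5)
Your overall strategy is the same as the paper's: diagonalize $\omega$ and $\omega_u$ at a point, expand the wedge product into monomials $\lambda_{j_1}\cdots\lambda_{j_i}$ times first derivatives of $u$, and invoke Lemma~2.1. However, the specific mechanism you propose for reducing to diagonal terms is incorrect, and this is not just bookkeeping.

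You claim the off-diagonal contributions $u_j\bar u_l$ with $j\neq l$ \emph{cancel} because ``$\omega_u^i\wedge T_i$ are real $(n-i-1,n-i-1)$-forms built from diagonalized data, so they contribute only the complementary diagonal volume form.'' This is false: while $\omega$ and $\omega_u$ are diagonal at the chosen point, the factors $\partial\omega$, $\bar\partial\omega$, and $\partial\bar\partial\omega$ appearing in $T_i$ are not diagonal in any sense — they carry arbitrary torsion and curvature components of the background Hermitian metric. Consequently $\omega_u^i\wedge T_i$ is not a form supported on coordinate bidegree-$(n-1,n-1)$ diagonal monomials, and the cross terms $u_j\bar u_l$ with $j\neq l$ genuinely appear in the expansion. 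What the paper does instead — and what is actually needed — is a symmetrization step: bound $|u_j||u_{\bar l}|\le\tfrac12\bigl(|u_j|^2+|u_l|^2\bigr)$, after which one has a sum of the form $\sum_j |u_j|^2\,|\lambda_{j_1}\cdots\lambda_{j_i}|$ with all $j_m\ne j$, and Lemma~2.1 then gives the bound by $\sum_j\sigma_i(\lambda|j)|u_j|^2$, which is (up to a combinatorial constant) exactly the right-hand side integrand. So the fix is straightforward, but ``the off-diagonal terms cancel'' is the wrong reason; ``the off-diagonal terms are absorbed by Cauchy--Schwarz'' is what the argument actually requires.
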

\begin{proof}
For $x \in M$ ,we choose the coordinates such that
\[
\omega \left( x \right) = \sum\limits_{j = 1}^n {dz^j  \wedge d\bar z^j } ,\omega _u \left( x \right) = \sum\limits_{j = 1}^n {\lambda _j dz^j  \wedge d\bar z^j } ,\]
 and \[
 \lambda _1  \ge \lambda _2  \ge  \cdots  \ge \lambda _n.
\]

Thus we have
\begin{align}
 \sum\limits_{i = 0}^{k - 2} {\left| {\frac{{\sqrt { - 1} \partial u \wedge \bar \partial u \wedge \omega _u ^i  \wedge T_i }}{{\omega ^n }}} \right|}  \le& C\sum\limits_{i = 0}^{k - 2} \sum\limits_{1 \le j_1  <  \cdots  < j_i  \le n, \ne j,l} {\left| {u_j } \right|\left| {u_{\bar l} } \right|\left| {\lambda _{j_1 } \lambda _{j_2 }  \cdots \lambda _{j_i } } \right|}  \\
  \le& C \sum\limits_{i = 0}^{k - 2}\sum\limits_{j = 1}^{n} \sum\limits_{\begin{array}{*{20}c}
   {1 \le j_1< \cdots <j_i\le n}\\
   {j_l\ne j}  \\
\end{array}} {\left| {u_j } \right|^2 \left| {\lambda _{j_1 } \lambda _{j_2 }  \cdots \lambda _{j_i } } \right|}  \notag\\
  \le& C \sum\limits_{i = 0}^{k - 2}\sum\limits_{j = 1}^{n}\sigma _i \left( {\lambda \left| j \right.} \right)\left| {u_j } \right|^2  \notag\\
  =&{C\sum\limits_{i = 0}^{k - 2}{\frac{{\sqrt { - 1} \partial u \wedge \bar \partial u \wedge \omega _u ^i  \wedge \omega ^{n - i - 1} }}{{\omega ^n }}}},\notag
 \end{align}
where we have used the lemma $2.1$ in the last inequality.
\end{proof}

\subsection{Commutation formula of covariant derivatives}
In local complex coordinates $z_1,\cdots,z_n$, we have
\begin{align}
g_{i\bar j}=g(\frac{\p}{\p z^i},\frac{\p}{\p\bar z^j}),
\{g^{i\bar j}\}=\{g_{i\bar j}\}^{-1}
\end{align}
For the Chern connection $\nabla$ ,we denote the covariant derivatives as follows:
\begin{align}
u_i  = \nabla _{\frac{\partial }{{\partial z^i }}} u,u_{i\bar j }  = \nabla _{\frac{\partial }{{\partial\bar z^j  }}} \nabla _{\frac{\partial }{{\partial z^i }}} u,u_{i\bar j k}  = \nabla _{\frac{\partial }{{\partial z^k }}} \nabla _{\frac{\partial }{{\partial \bar z^j }}} \nabla _{\frac{\partial }{{\partial z^i }}} u
\end{align}
we use the following commutation formula for covariant derivatives on Hermitian manifolds which can be founded in \cite{TW5}:
\begin{align}\label{three}
u_{i\bar j l}  & = u_{l\bar j i}  - T_{li}^p u_{p\bar j }\\
u_{pi\bar j }  &= u_{p\bar j i}  + u_q {R_{i\bar j p}}^q \notag\\
u_{i\bar p \bar j } &= u_{i\bar j \bar p } - \bar {T_{jp}^q } u_{i\bar q }.\notag
\end{align}
\begin{align}\label{four}
u_{i\bar j l\bar m }  = u_{l\bar m i\bar j }  + u_{p\bar j } {R_{l\bar m i}}^p  - u_{p\bar m } {R_{i\bar j l}}^p  - T_{li}^p u_{p\bar m \bar j }  - \bar {T_{mj}^p } u_{l\bar p i}  - T_{li}^p \bar {T_{mj}^q } u_{p\bar q }
 \end{align}
 For the details we recommend the reader to the reference \cite{TW5}.

\section{ zero order estimate}
In this section we derive the zero order estimate by proving a Cherrier-type inequality and the lemmas in \cite{TW2}. Since the constant $b$ is in Theorem 1.1 satisfies \[|b|\le\sup|f|+C,\]where $C$ is a positive constant depending only on $(M, \omega)$. Thus, we will assume $b=0$ for convenience.
\begin{theorem} Let $u$ be a solution of Theorem 1.1. Then there exists
a constant $C$ depending only on $(M,\omega)$ and $\mathop {\sup }\limits_M |f|$
such that
\begin{center}
$\mathop {\sup }\limits_M |u|\leq C.$
\end{center}
\end{theorem}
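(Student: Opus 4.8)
The plan is to follow the strategy of Tosatti--Weinkove \cite{TW2} for the Monge--Amp\`ere case and adapt it to the $k$-Hessian equation using the auxiliary inequalities of Lemma \ref{inequality}. The $C^0$ estimate splits into an $L^\infty$ bound on $u^- = \max(-u,0)$ (equivalently $\inf_M u \ge -C$, since $\sup_M u = 0$) via a Moser iteration, and the key analytic input for the iteration is a \emph{Cherrier-type inequality}: for suitable $p \ge 1$,
\begin{align*}
\int_M |\nabla(e^{-pu/2})|_g^2\, \omega^n \le C p \int_M e^{-pu}\, \omega^n,
\end{align*}
or a closely related form. Once this inequality is established, the Moser iteration proceeds exactly as in \cite{TW2} and \cite{C}: one obtains $\|e^{-u}\|_{L^{p_0}} \le C$ for some fixed $p_0$ by a separate argument (using that $\int_M e^f \omega^n$ is controlled and the equation forces $\int_M C^k_n \omega_u^k \wedge \omega^{n-k}$ to equal a fixed cohomological-type quantity up to torsion corrections), and then bootstraps to $L^\infty$.

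First I would test the equation \eqref{equation} against $e^{-pu}$ (or against $-u \cdot$ appropriate weight) and integrate by parts. Writing $\omega_u^k \wedge \omega^{n-k} = (\omega + \sqrt{-1}\partial\bar\partial u)^k \wedge \omega^{n-k}$ and expanding, the term $\sqrt{-1}\partial\bar\partial u \wedge (\text{rest})$ integrates by parts to produce a good term $\sqrt{-1}\partial u \wedge \bar\partial u \wedge \omega_u^i \wedge \omega^{n-i-1}$ for each $0 \le i \le k-1$, which after using $\omega_u \in \Gamma_k$ controls $\sum_i \sigma_i(\lambda|j)|u_j|^2$ from below and hence $|\nabla(e^{-pu/2})|^2$. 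In the K\"ahler case that is essentially the whole story, but here, because $d\omega \neq 0$, integrating by parts produces extra terms in which the torsion $T$ and $\partial\bar\partial\omega$ appear contracted against $\sqrt{-1}\partial u \wedge \bar\partial u \wedge \omega_u^i$ — precisely the quantities $\dfrac{\sqrt{-1}\partial u \wedge \bar\partial u \wedge \omega_u^i \wedge T_i}{\omega^n}$ appearing in Lemma \ref{inequality}. The point of Lemma \ref{inequality} is that these error terms are bounded (up to a constant depending only on $(M,\omega)$) by the good terms $\dfrac{\sqrt{-1}\partial u \wedge \bar\partial u \wedge \omega_u^i \wedge \omega^{n-i-1}}{\omega^n}$, so they can be absorbed for $p$ large. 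This is the step where the hypothesis $\omega_u \in \Gamma_k$ and Lemma 2.1 are essential: without positivity of $\omega_u$ one cannot directly sign the good terms, and one must instead use that $\sigma_i(\lambda|j) > 0$ for $i \le k-1$ together with the bounds $|\lambda_{j_1}\cdots\lambda_{j_i}| \le C\sigma_i(\lambda|j)$ to show the good terms dominate.

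The main obstacle, as in \cite{TW2}, is obtaining the initial $L^{p_0}$ estimate on $e^{-u}$ to seed the iteration, and, intertwined with it, handling the error terms carefully enough that the absorption constant does not blow up. Concretely one must choose the exponent $p$ and organize the integration by parts so that all torsion error terms — those linear in $\partial u$ as well as those coming from $\partial$ and $\bar\partial$ hitting the $\omega^{n-i-1}$ and $\omega_u^i$ factors — are either of the form covered by Lemma \ref{inequality} or are lower order (controlled by $\int e^{-pu}$ via Cauchy--Schwarz with a small constant). I would also need the elementary-symmetric-function inequalities of \cite{CW} (Newton--Maclaurin type and the sign/monotonicity facts for $\sigma_i(\lambda|j)$) to pass between $\omega_u^k \wedge \omega^{n-k}/\omega^n = C^{-1}\sigma_k(\lambda)$ on the right-hand side and the $\sigma_i(\lambda)$, $i < k$, that appear on the left after integration by parts; in particular one uses $\sigma_k \le C\sigma_i \sigma_{k-i}$ and $\sigma_{k-1}(\lambda|j)$-type bounds to relate $e^f$ to the good gradient terms. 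Once the Cherrier-type inequality and the $L^{p_0}$ bound are in hand, the rest is the standard Moser iteration / De Giorgi argument, yielding $\sup_M |u| \le C$ with $C$ depending only on $(M,\omega)$ and $\sup_M|f|$, which is the assertion of the theorem.
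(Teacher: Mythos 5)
Your proposal follows the same route as the paper: reduce the $C^0$ bound to the Cherrier-type inequality of Lemma \ref{Ch-Ineq}, prove it by testing the equation against $e^{-pu}$, integrating by parts, and absorbing the torsion error terms into the good terms $\sqrt{-1}\,\partial u\wedge\bar\partial u\wedge\omega_u^i\wedge\omega^{n-i-1}$ via Lemma \ref{inequality}, then invoke the Moser-iteration machinery of Tosatti--Weinkove \cite{TW2}. The only place you are looser than the paper is in describing how \emph{all} the error integrals arising from $\bar\partial\partial$ acting on $\omega_u^{i-1}\wedge\omega^{n-i}$ reduce to the form covered by Lemma \ref{inequality} — the paper does this by an explicit inductive claim \eqref{C^0claim} on the power $i$ of $\omega_u$ — but your overall outline is correct and essentially identical in approach.
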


Due to Tosatti-Weinkove's results, the zero order estimate can be reduced to derive a Cherrier-type inequality which was firstly proved in Cherrier's paper \cite{C}. For the Hessian equation, the analysis becomes a bit complicated in the lack of the positivity of $\omega_u$. Recently\footnote{The author independently proved the $C^0$ estimate before \cite{Sun} was posted on arXiv.}, Sun \cite{Sun} also proved the following lemma for $k=2$ and $k\ge3$ under some extra conditions.

 \begin{lemma}\label{Ch-Ineq}  There exist constants $p_0$ and $C$ depending only
 on $(M,\omega)$ such that for any $p\geq p_0$
\begin{align}
   \int_M |\p e^{-\frac{p}{2}u}|^2_g\omega^{n}
   \leq C p \int_M e^{-pu}\omega^{n} \notag
\end{align}
\end{lemma}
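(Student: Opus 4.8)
The plan is to derive the Cherrier-type inequality by testing the equation against a suitable power of $e^{-pu}$ and integrating by parts, exploiting the expansion $\omega_u^k\wedge\omega^{n-k}$ into terms of the form $\omega_u^i\wedge\omega^{n-i}$ together with the torsion terms that appear when $\omega$ is not closed. First I would start from the defining equation $C_n^k\,\omega_u^k\wedge\omega^{n-k}=e^f\omega^n$ and multiply by $e^{-pu}$, then integrate over $M$. Writing $\omega_u=\omega+\sqrt{-1}\partial\bar\partial u$ and expanding, we have $\omega_u^k\wedge\omega^{n-k}=\sum_{j}\binom{k}{j}(\sqrt{-1}\partial\bar\partial u)^j\wedge\omega^{n-j}$, and I would integrate by parts on each term involving $\partial\bar\partial u$ so as to move one derivative onto $e^{-pu}$, producing factors of $p\,e^{-pu}\,\sqrt{-1}\partial u\wedge\bar\partial u$ wedged against $\omega_u^i$ and powers of $\omega$, plus correction terms where the derivative hits $\omega$ itself (i.e.\ $\partial\omega$, $\bar\partial\omega$, $\partial\bar\partial\omega$). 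The leading good term is
\begin{align}
p\int_M e^{-pu}\,\frac{\sqrt{-1}\partial u\wedge\bar\partial u\wedge\omega_u^{k-1}\wedge\omega^{n-k}}{\omega^n}\,\omega^n,\notag
\end{align}
which, since $\omega_u\in\Gamma_k(M)$, controls from below a positive multiple of $\sum_{i=0}^{k-1}$ of the quantities $\int_M e^{-pu}\,\sqrt{-1}\partial u\wedge\bar\partial u\wedge\omega_u^i\wedge\omega^{n-i-1}$ after using the Newton-Maclaurin-type positivity; in particular it dominates $\int_M|\partial e^{-\frac{p}{2}u}|_g^2\,\omega^n$ up to a constant, because $|\partial e^{-\frac p2 u}|_g^2 = \frac{p^2}{4}e^{-pu}|\partial u|_g^2$ and $|\partial u|_g^2\,\omega^n$ is comparable to $\sqrt{-1}\partial u\wedge\bar\partial u\wedge\omega^{n-1}$.

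Next I would bound the error terms. These are exactly the quantities estimated in Lemma \ref{inequality}: each integration by parts against a non-closed $\omega$ produces a term of the shape $\sqrt{-1}\partial u\wedge\bar\partial u\wedge\omega_u^i\wedge T_i$ with $T_i$ the universal polynomial in $\omega,\partial\omega,\bar\partial\omega,\partial\bar\partial\omega$ described there, plus lower-order terms with only one factor of $\partial u$ or none, which are handled by Cauchy-Schwarz (absorbing $|\partial u|$ against the good gradient term at the cost of enlarging the constant and a harmless $\int_M e^{-pu}\omega^n$ contribution). By Lemma \ref{inequality} we get
\begin{align}
\sum_{i=0}^{k-2}\left|\int_M e^{-pu}\,\frac{\sqrt{-1}\partial u\wedge\bar\partial u\wedge\omega_u^i\wedge T_i}{\omega^n}\,\omega^n\right|\le C\sum_{i=0}^{k-2}\int_M e^{-pu}\,\frac{\sqrt{-1}\partial u\wedge\bar\partial u\wedge\omega_u^i\wedge\omega^{n-i-1}}{\omega^n}\,\omega^n,\notag
\end{align}
and the right-hand side is precisely a lower-order piece of the good term coming from the leading integration by parts (since $\omega_u\in\Gamma_k(M)$ implies $\sqrt{-1}\partial u\wedge\bar\partial u\wedge\omega_u^{k-1}\wedge\omega^{n-k}$ bounds each $\sqrt{-1}\partial u\wedge\bar\partial u\wedge\omega_u^i\wedge\omega^{n-i-1}$ up to a constant, $0\le i\le k-1$). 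Hence, choosing $p\ge p_0$ large enough that the coefficient $p$ in front of the good term beats the fixed constant $C$ multiplying the error, all the error terms can be absorbed, leaving
\begin{align}
\int_M|\partial e^{-\frac p2 u}|_g^2\,\omega^n\le C p\int_M e^{-pu}\,\omega^n,\notag
\end{align}
where the right-hand side collects the $e^f\le e^{\sup|f|}$ contribution and the residual zeroth-order terms from Cauchy-Schwarz.

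The main obstacle I anticipate is bookkeeping the integration by parts correctly in the Hermitian (non-Kähler) setting: unlike the Kähler case, $d\omega\ne0$, so each $\partial\bar\partial u$ term generates several torsion corrections, and one must verify that \emph{all} of them are of the form covered by Lemma \ref{inequality} (i.e.\ that no term with a higher power of $\omega_u$, say $\omega_u^{k-1}$ or beyond, survives in the error with an uncontrollable sign). The key structural point making this work is that $\omega_u^{k-1}$ only ever appears in the single good term with the favorable sign, while every genuinely new Hermitian term carries at most $\omega_u^{k-2}$ and is therefore, by the $\Gamma_k$-positivity, strictly lower order than the good term; combined with the arbitrarily large factor $p$ this is what lets the argument close. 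A secondary technical point is the pointwise reduction at a diagonalizing frame used in Lemma \ref{inequality}: one must check it is compatible with the global integration by parts, but since both sides of \eqref{lemma2.2} are globally defined densities this is immediate.
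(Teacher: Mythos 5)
Your overall plan (multiply the equation by $e^{-pu}$, integrate by parts, bound the torsion errors with Lemma~\ref{inequality}, and absorb by taking $p$ large) matches the paper's strategy, but there are two concrete gaps. First, you identify the ``leading good term'' as the single piece $p\int_M e^{-pu}\sqrt{-1}\partial u\wedge\bar\partial u\wedge\omega_u^{k-1}\wedge\omega^{n-k}$ and then assert --- twice --- that $\Gamma_k$-positivity makes this piece dominate each $\sqrt{-1}\partial u\wedge\bar\partial u\wedge\omega_u^i\wedge\omega^{n-i-1}$ for $0\le i\le k-1$, in particular the $i=0$ piece that gives $|\partial u|_g^2\,\omega^n$. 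This is false: pointwise the $\omega_u^{k-1}$ contribution is comparable to $\sum_j|u_j|^2\sigma_{k-1}(\lambda|j)$, which vanishes as the eigenvalues of $\omega_u$ approach zero from inside $\Gamma_k$ (e.g.\ $\lambda=\varepsilon(1,\dots,1)$ gives $\sigma_{k-1}(\lambda|j)\sim\varepsilon^{k-1}\to 0$ while $\sigma_0(\lambda|j)=1$). The paper instead uses the telescoping factorization $\omega_u^k-\omega^k=\sqrt{-1}\partial\bar\partial u\wedge\alpha$ with $\alpha=\sum_{i=1}^k\omega_u^{i-1}\wedge\omega^{n-i}$, so that the good term $A=p\int_M e^{-pu}\sqrt{-1}\partial u\wedge\bar\partial u\wedge\alpha$ already \emph{contains} the $\omega^{n-1}$ piece additively; no lemma is needed to ``control'' it, and $\Gamma_k$-positivity is used only to guarantee all summands are nonnegative. (Your binomial expansion $\omega_u^k\wedge\omega^{n-k}=\sum_j\binom kj(\sqrt{-1}\partial\bar\partial u)^j\wedge\omega^{n-j}$ does not lead to this structure; the powers $(\partial\bar\partial u)^j$ do not have the needed positivity, and integrating by parts on it would not produce $\omega_u^i$-factors.)

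Second, the absorption step is not as simple as ``$p$ beats a fixed constant.'' After one integration by parts the residual error $\int_M e^{-pu}\sqrt{-1}\bar\partial u\wedge\partial\alpha$ has only a \emph{single} $\partial u$ wedged against $\omega_u^{i}$-factors that can be unboundedly large, so Cauchy--Schwarz against the good gradient term leaves you with $\int_M e^{-pu}\,|\omega_u^{i}\wedge\ldots|$-type quantities that are \emph{not} controlled by $\int_M e^{-pu}\omega^n$. The paper avoids this by integrating by parts a second time to produce $B=\frac1p\int_M e^{-pu}\sqrt{-1}\bar\partial\partial\alpha$, whose favorable $\frac1p$ prefactor is essential, and then runs a nontrivial induction (claim \eqref{C^0claim}) to repeatedly trade a factor of $\omega_u$ for gradient terms and $\omega^n$-terms, using Lemma~\ref{inequality} at each stage. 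Your sketch does not engage with either the second integration by parts or the inductive unfolding, so as written the error estimate does not close.
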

\begin{proof}
By the equation, we have
\begin{align*}
 {\omega_u^k} \wedge \omega^{n-k}-\omega^n=(e^f-1)\omega^n
 \leq { C_0 \omega^n} \notag,
\end{align*}
where $ C_0$ is a constant depending only on $f$.

On the other hand,
\begin{align}\label{C^0estimate}
\omega _u ^k  \wedge \omega ^{n - k}  - \omega ^n  = \left( {\omega _u ^k  - \omega ^k } \right) \wedge \omega ^{n - k}  = \sqrt { - 1} \partial \bar \partial u \wedge \alpha,
\end{align}
where $\alpha= {\sum\limits_{i = 1}^k {\omega _u ^{i - 1}  \wedge \omega ^{n - i} } } $.

Now multiply both sides in \eqref{C^0estimate} by $e^{ - pu}$ and integrate by parts ,
\begin{align}\label{C^0key}
 C_0 \int_M {e^{ - pu} \omega ^n }  \ge& \int_M {e^{ - pu} \sqrt { - 1} \partial \bar \partial u \wedge \alpha}  \\
  =& - \int_M {\partial e^{ - pu} \sqrt { - 1} \bar \partial u \wedge \alpha}
  + \int_M {e^{ - pu} \sqrt { - 1} \bar \partial u \wedge \partial  \alpha} \notag\\
  =& p\int_M {e^{ - pu} \sqrt { - 1} \partial u \wedge \bar \partial u \wedge\alpha}-\frac{1}{p}\int_M {\sqrt { - 1} \bar \partial e^{ - pu}  \wedge \partial  \alpha}   \notag\\
  =& p\int_M {e^{ - pu} \sqrt { - 1} \partial u \wedge \bar \partial u \wedge\alpha}+\frac{1}{p}\int_M { e^{ - pu}  \sqrt { - 1} \bar \partial  \partial  \alpha} \notag   \\
 :=&A+B\notag,
 \end{align}

where we  denote
\begin{align*}
A=&p\int_M {e^{ - pu} \sqrt { - 1} \partial u \wedge \bar \partial u \wedge \left( {\sum\limits_{i = 1}^k {\omega _u ^{i - 1}  \wedge \omega ^{n - i} } } \right)}\\
 B =&\frac{1}{p}\int_M { e^{ - pu}  \sqrt { - 1} \bar \partial  \partial  \alpha}.
\end{align*}

We will use  the term $A$ to control the terms $B$. Direct calculation gives
\begin{align*}
\partial\alpha= n\sum\limits_{i = 1}^{k - 1} {  \omega _u ^{i-1}  \wedge \omega ^{n - i - 1}  \wedge  \partial \omega  }+(n-k)\omega _u ^{k-1}  \wedge \omega ^{n - k - 1}  \wedge  \partial \omega
\end{align*}

\begin{align*}
\bar\partial\partial\alpha=&(n - k)( n - k - 1)\omega _u ^{k - 1}  \wedge \omega ^{n - k - 2}  \wedge \bar \partial \omega  \wedge \partial \omega+(n-k)\omega _u ^{k - 1}  \wedge \omega ^{n - k - 1}  \wedge \bar \partial \partial \omega\\
&+ (n-k)(n + k - 1) \omega _u ^{k - 2}  \wedge \omega ^{n - k - 1}  \wedge \bar \partial \omega  \wedge \partial \omega\\
&+ n( n - 1 )\sum\limits_{i = 0}^{k - 3}  \omega _u ^i  \wedge \omega ^{n - i - 3}  \wedge \bar \partial \omega  \wedge \partial \omega  + n\sum\limits_{i = 1}^{k - 2}  \omega _u ^i  \wedge \omega ^{n - i - 2}  \wedge \bar \partial \partial \omega
\end{align*}
Therefore, we have
\begin{align*}
B=& \frac{{\left( {n - k} \right)\left( {n - k - 1} \right)}}{p}\int_M {\sqrt { - 1} e^{ - pu} \omega _u ^{k - 1}  \wedge \omega ^{n - k - 2}  \wedge \bar \partial \omega  \wedge \partial \omega }\\
   &+ \frac{{\left( {n - k} \right)}}{p}\int_M {\sqrt { - 1} e^{ - pu} \omega _u ^{k - 1}  \wedge \omega ^{n - k - 1}  \wedge \bar \partial \partial \omega } \\
   &+\frac{{\left( {n + k - 1} \right)\left( {n - k} \right)}}{p}\int_M {\sqrt { - 1} e^{ - pu} \omega _u ^{k - 2}  \wedge \omega ^{n - k - 1}  \wedge \bar \partial \omega  \wedge \partial \omega }  \\
  &+ \frac{{n\left( {n - 1} \right)}}{p}\sum\limits_{i = 0}^{k - 3} {\int_M {\sqrt { - 1} e^{ - pu} \omega _u ^i  \wedge \omega ^{n - i - 3}  \wedge \bar \partial \omega  \wedge \partial \omega } } + \frac{n}{p}\sum\limits_{i = 1}^{k - 2} {\int_M {\sqrt { - 1} e^{ - pu} \omega _u ^i  \wedge \omega ^{n - i - 2}  \wedge \bar \partial \partial \omega } }   \\
\end{align*}
When $k=2$, the term $B$ just becomes
\begin{align}\label{k=2,0}
B=& \frac{{\left( {n - 2} \right)\left( {n -3} \right)}}{p}\int_M {\sqrt { - 1} e^{ - pu} \omega _u   \wedge \omega ^{n - 4}  \wedge \bar \partial \omega  \wedge \partial \omega }
   + \frac{{\left( {n - 2} \right)}}{p}\int_M {\sqrt { - 1} e^{ - pu} \omega _u   \wedge \omega ^{n - 3}  \wedge \bar \partial \partial \omega } \\
   &+\frac{{\left( {n + 1} \right)\left( {n - 2} \right)}}{p}\int_M {\sqrt { - 1} e^{ - pu}  \omega ^{n - 3}  \wedge \bar \partial \omega  \wedge \partial \omega }\notag\\
   =& \frac{{\left( {n - 2} \right)\left( {n -3} \right)}}{p}\int_M {\sqrt { - 1} e^{ - pu} \sqrt { - 1}\partial \bar\partial u   \wedge \omega ^{n - 4}  \wedge \bar \partial \omega  \wedge \partial \omega }
   + \frac{{\left( {n - 2} \right)}}{p}\int_M {\sqrt { - 1} e^{ - pu} \sqrt { - 1}\partial \bar\partial u \wedge \omega ^{n - 3}  \wedge \bar \partial \partial \omega }\notag\\
   &+\frac{{2( n-1)( n - 2 )}}{p}\int_M {\sqrt { - 1} e^{ - pu}  \omega ^{n - 3}  \wedge \bar \partial \omega  \wedge \partial \omega }+\frac{{\left( {n - 2} \right)}}{p}\int_M {\sqrt { - 1} e^{ - pu} \omega ^{n - 2}  \wedge \bar \partial \partial \omega }\notag\\
   \ge& \frac{{\left( {n - 2} \right)\left( {n -3} \right)}}{p}\int_M {\sqrt { - 1} e^{ - pu} \sqrt { - 1}\partial \bar\partial u    \wedge \omega ^{n - 4}  \wedge \bar \partial \omega  \wedge \partial \omega } \notag\\
   &+ \frac{{\left( {n - 2} \right)}}{p}\int_M {\sqrt { - 1} e^{ - pu} \sqrt { - 1}\partial \bar\partial u  \wedge \omega ^{n - 3}  \wedge \bar \partial \partial \omega }
   -\frac{C_1}{p}\int_M {e^{ - pu} \omega ^n }\notag
   \end{align}
We next use integration by parts again to deal with the first term and second term on the right hand side of the above equality. Indeed,
\begin{align}\label{k=2,1}
&\int_M {\sqrt { - 1} e^{ - pu} \sqrt { - 1}\partial \bar\partial u  \wedge \omega ^{n - 4}  \wedge \bar \partial \omega  \wedge \partial \omega }\notag\\
\qquad\qquad =&p\int_M {\sqrt { - 1} e^{ - pu} \sqrt { - 1}\partial u\wedge \bar\partial u    \wedge \omega ^{n - 4}  \wedge \bar \partial \omega  \wedge \partial \omega }+\int_M {\sqrt { - 1} e^{ - pu} \sqrt { - 1} \bar\partial u    \wedge \partial(\omega ^{n - 4}  \wedge \bar \partial \omega  \wedge \partial \omega )}\\
=&p\int_M {\sqrt { - 1} e^{ - pu} \sqrt { - 1}\partial u\wedge \bar\partial u    \wedge \omega ^{n - 4}  \wedge \bar \partial \omega  \wedge \partial \omega }+\frac{1}{p}\int_M {\sqrt { - 1} e^{ - pu} \sqrt { - 1}     \bar\partial  \partial(\omega ^{n - 4}  \wedge \bar \partial \omega  \wedge \partial \omega )}\notag\\
\ge& -pC_1 \int_M {e^{ - pu} \sqrt { - 1}\partial u\wedge \bar\partial u    \wedge \omega ^{n - 1}}-\frac{C_1}{p}\int_M {e^{ - pu} \omega ^n }\notag\\
\ge&-C_1A-\frac{C_1}{p}\int_M {e^{ - pu} \omega ^n }\notag
\end{align}
The similar calculation gives
\begin{align}\label{k=2,2}
\int_M {\sqrt { - 1} e^{ - pu} \sqrt { - 1}\partial \bar\partial u  \wedge \omega ^{n - 3}  \wedge \bar \partial \partial \omega }\ge-C_1A-\frac{C_1}{p}\int_M {e^{ - pu} \omega ^n }
\end{align}
Inserting \eqref{k=2,1} and \eqref{k=2,2} into \eqref{k=2,0}, we have

\[B\ge-\frac{C_1}{p}A-\frac{C_1}{p}\int_M {e^{ - pu} \omega ^n }\]

By \eqref{C^0key} and choosing $p_0=2C_1+1$, we obtain for $p\ge p_0$
\[\frac{A}{2}\le(1-\frac{C_1}{p})A\le(\frac{C_1}{p}+C_0)\int_M {e^{ - pu} \omega ^n }\le( C_0+1)\int_M {e^{ - pu} \omega ^n }\]
By \eqref{C^0basicfact} in the next page, we thus prove the lemma.

 \  For the general $k, 3\le k \le n$, we claim that there exist   constants $C_{1i}$ depending only on $n,k,\left(M,\omega\right)$ such that the following holds for $0\le i\le k-1$,
 \begin{align}\label{C^0claim}
\int_{\rm{M}} {e^{ - pu} \omega _u ^i  \wedge T_i }  \ge  -{p} C_{1i}\sum\limits_{j = 0}^{k - 2} {\int_M {e^{ - pu} \sqrt { - 1} \partial u \wedge \bar \partial u \wedge \omega _u ^j  \wedge \omega ^{n - j - 1} } }  -C_{1i}\int_M {e^{ - pu} \omega ^n }
\end{align}
,where $T_i$ is defined as the combinations of $\omega,\partial \omega, \partial\bar\partial \omega$, more precisely
\begin{align*}
T_i  = \sum\limits_{0 \le 3p + 2q \le n - i} {\omega ^{n - i - 3p - 2q}  \wedge (\sqrt { - 1})^p\left( {\partial \omega } \right)^p  \wedge \left( {\bar \partial \omega } \right)^p  \wedge (\sqrt { - 1})^q \left( {\partial \bar \partial \omega } \right)^q }
\end{align*}
We use the claim \eqref{C^0claim} to prove the lemma
 \begin{align*}
 B
 \ge&  - C_1 \sum\limits_{i = 2}^k {\int_M {e^{ - pu} \sqrt { - 1} \partial u \wedge \bar \partial u \wedge \omega _u ^{k - i}  \wedge \omega ^{n + i - k - 1} } }  - \frac{{C_1 }}{p}\int_M {e^{ - pu} \omega ^n }  \\
  \ge&  - \frac{C_1}{p} A  - \frac{{C_1 }}{p}\int_M {e^{ - pu} \omega ^n }
 \end{align*}

Thus we have
\begin{align*}
(1-\frac{C_1}{p})A\le(\frac{C_1}{p}+C_0)\int_M {e^{ - pu} \omega ^n }
\end{align*}

Now we choose $p_0=2C_1+1$, then for any $p\geq p_0$,
\begin{align}
&p^2\int_M e^{-pu}\sqrt{-1}\p u\wedge\bar\p u\wedge\omega^{n-1}
\leq 2p(C_0+1)\int_M e^{-pu}\omega^n  \notag
\end{align}

Therefore we have
\begin{align}\label{C^0basicfact}
\int_M |\p e^{-\frac{p}{2}u}|^2_g\omega^{n}&=\frac{np^2}{4}\int_M e^{-pu}\sqrt{-1}\p u\wedge\bar\p u\wedge \omega^{n-1} \\
&\leq\frac{np(C_0+1)}{2}\int_M e^{-pu}\omega^{n}
=p C\int_M e^{-pu}\omega^{n}\notag
\end{align}

Now, we prove the claim \eqref{C^0claim} by inductive argument.

\ When $i=1$, we have
\begin{align*}
 \int_{\rm{M}} {e^{ - pu} \omega _u  \wedge T_{\rm{1}} }  = & \int_{\rm{M}} {e^{ - pu} \omega  \wedge T_{\rm{1}} } {\rm{ + }}\int_{\rm{M}} {e^{ - pu} \sqrt { - 1} \partial \bar \partial u \wedge T_{\rm{1}} }  \\
 =&\int_{\rm{M}} {e^{ - pu} \omega  \wedge T_{\rm{1}} }  - \int_{\rm{M}} {\partial e^{ - pu}  \wedge \sqrt { - 1} \bar \partial u \wedge T_{\rm{1}} }  + \int_{\rm{M}} {e^{ - pu} \sqrt { - 1} \bar \partial u \wedge \partial T_{\rm{1}} }  \\
 =& \int_{\rm{M}} {e^{ - pu} \omega  \wedge T_{\rm{1}} }  + p\int_{\rm{M}} {e^{ - pu} \sqrt { - 1} \partial u \wedge \bar \partial u \wedge T_{\rm{1}} }  - \frac{1}{p}\int_{\rm{M}} {\sqrt { - 1} \bar \partial e^{ - pu}  \wedge \partial T_{\rm{1}} }  \\
  =& p\int_{\rm{M}} {e^{ - pu} \sqrt { - 1} \partial u \wedge \bar \partial u \wedge T_{\rm{1}} }  + \int_{\rm{M}} {e^{ - pu} \omega  \wedge T_{\rm{1}} }  - \frac{1}{p}\int_{\rm{M}} {e^{ - pu}  \wedge \sqrt { - 1} \partial \bar \partial T_{\rm{1}} }  \\
  &\ge  - C_1 p\int_{\rm{M}} {e^{ - pu} \sqrt { - 1} \partial u \wedge \bar \partial u \wedge T_{\rm{1}} }  - C_1 \int_{\rm{M}} {e^{ - pu} \omega ^n }
 \end{align*}

 Suppose that the claim is true for $l\le i-1$, we will prove that the claim is also true for $l=i$. Indeed,
\begin{align*}
 \int_{\rm{M}} {e^{ - pu} \omega _u ^i  \wedge T_i }  =& \int_{\rm{M}} {e^{ - pu} \omega _u ^{i - 1}  \wedge \omega  \wedge T_i }  + \int_{\rm{M}} {e^{ - pu} \sqrt { - 1} \partial \bar \partial u \wedge \omega _u ^{i - 1}  \wedge T_i }  \\
  =& \int_{\rm{M}} {e^{ - pu} \omega _u ^{i - 1}  \wedge \omega  \wedge T_i }  + p\int_{\rm{M}} {e^{ - pu} \sqrt { - 1} \partial u \wedge \bar \partial u \wedge \omega _u ^{i - 1}  \wedge T_i } \\
   &+ \int_{\rm{M}} {e^{ - pu} \bar \partial u \wedge \sqrt { - 1} \partial \left( {\omega _u ^{i - 1}  \wedge T_i } \right)}  \\
 : =& A_{i,1}  + A_{i,2}  + A_{i,3}
 \end{align*}
By the induction ,
\begin{align*}
 A_{i,1}  =& \int_{\rm{M}} {e^{ - pu} \omega _u ^{i - 1}  \wedge \omega  \wedge T_i }  \\
  \ge&  - pC_{1i} \left( {n,k,\omega } \right)\sum\limits_{j = 0}^{k - 2} {\int_M {e^{ - pu} \sqrt { - 1} \partial u \wedge \bar \partial u \wedge \omega _u ^j  \wedge \omega ^{n - j - 1} } }  - C_{1i} \left( {n,k,\omega } \right)\int_M {e^{ - pu} \omega ^n }
 \end{align*}

By the inequality \eqref{lemma2.2} in lemma \ref{inequality}, we have
\begin{align}\label{C^0keyinequality}
A_{i,2}  = p\int_{\rm{M}} {e^{ - pu} \sqrt { - 1} \partial u \wedge \bar \partial u \wedge \omega _u ^{i - 1}  \wedge T_i }  \ge  -p C_{2i} \int_{\rm{M}} {e^{ - pu} \sqrt { - 1} \partial u \wedge \bar \partial u \wedge \omega _u ^{i - 1}  \wedge \omega ^{n - i} }
\end{align}
Now we deal with the term $A_{i,3}$,
\begin{align*}
 A_{i,3}  =& \int_{\rm{M}} {e^{ - pu} \bar \partial u \wedge \sqrt { - 1} \partial \left( {\omega _u ^{i - 1}  \wedge T_i } \right)}= \frac{1}{p}\int_{\rm{M}} {e^{ - pu}   \sqrt { - 1} \bar \partial\partial \left( {\omega _u ^{i - 1}  \wedge T_i } \right)} \\
 =& \frac{{\left( {i - 1} \right)\left( {i - 2} \right)}}{p}\int_{\rm{M}} {e^{ - pu} \sqrt { - 1} \omega _u ^{i - 3}  \wedge \bar \partial \omega  \wedge \partial \omega  \wedge T_i } {\rm{ + }}\frac{{i - 1}}{p}\int_{\rm{M}} {e^{ - pu} \omega _u ^{i - 2}  \wedge \sqrt { - 1} \bar \partial \left( {\partial \omega  \wedge T_i } \right)}  \\
  &+ \frac{{i - {\rm{1}}}}{p}\int_{\rm{M}} {e^{ - pu} \omega _u ^{i - 2}  \wedge } \sqrt { - 1} \bar \partial \omega  \wedge \partial T_i  - \frac{1}{p}\int_{\rm{M}} {e^{ - pu} \omega _u ^{i - {\rm{1}}}  \wedge \sqrt { - 1} \partial \bar \partial T_i }  \\
  =& \frac{{\left( {i - 1} \right)\left( {i - 2} \right)}}{p}\int_{\rm{M}} {e^{ - pu} \sqrt { - 1} \omega _u ^{i - 3}  \wedge \bar \partial \omega  \wedge \partial \omega  \wedge T_i }  \\
  &+ \frac{{i - 1}}{p}\int_{\rm{M}} {e^{ - pu} \omega _u ^{i - 2}  \wedge \left[ {\sqrt { - 1} \bar \partial \left( {\partial \omega  \wedge T_i } \right) + \sqrt { - 1} \bar \partial \omega  \wedge \partial T_i } \right]}  - \frac{1}{p}\int_{\rm{M}} {e^{ - pu} \omega _u ^{i - {\rm{1}}}  \wedge \sqrt { - 1} \partial \bar \partial T_i }  \\
  \ge&  - pC_{3i} \sum\limits_{j = 0}^{k - 2} {\int_M {e^{ - pu} \sqrt { - 1} \partial u \wedge \bar \partial u \wedge \omega _u ^j  \wedge \omega ^{n - j - 1} } }  - C_{3i} \left( {n,k,\omega } \right)\int_M {e^{ - pu} \omega ^n } . \\
 \end{align*}
For the last inequality, we have used the induction.
\end{proof}

\section{second order estimate}
In this section we use the auxiliary function in \cite{TW5} which is modified by the auxiliary function in \cite{HMW} to derive the second order estimate of the form \eqref{C2estimate}. The difficult part arises from the third order derivatives'   Locally the equation is
\begin{equation}\label{loceq}
\sigma_k(\omega_u)=e^{f}.
\end{equation}

\begin{theorem}
There exists a uniform constant $C$ depending only on $(M,\omega)$ and $f$ such that
\begin{equation}\label{4.2}
  \max|\p\bar\p u|_g\leq C(1+\max|\nabla u|^2_g)
\end{equation}
\end{theorem}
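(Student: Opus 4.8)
Since $\omega_u\in\Gamma_k(M)$ we have $\sigma_1(\omega_u)>0$, so writing $\lambda_1\ge\cdots\ge\lambda_n$ for the eigenvalues of $\omega_u$ with respect to $\omega$ we get $\lambda_1>0$ and $\lambda_n>-(n-1)\lambda_1$, hence $|\lambda_i|\le(n-1)\lambda_1$ and $|\p\bar\p u|_g\le C(1+\lambda_1)$. Thus it suffices to bound the largest eigenvalue $\lambda_1:=\lambda_1(\omega_u)$ — a positive function on $M$, only continuous in general — by $C(1+\sup_M|\n u|^2)$. I work with the linearised operator $L:=F^{i\bar j}\n_i\n_{\bar j}$, $F^{i\bar j}=\partial\sigma_k(\omega_u)/\partial(\omega_u)_{i\bar j}$, which is positive on $\Gamma_k$; in coordinates with $\omega(x_0)=\delta$ and $\omega_u(x_0)$ diagonal one has $F^{i\bar i}=\sigma_{k-1}(\lambda|i)>0$, $Lu=k\sigma_k(\omega_u)-\mathcal F=ke^f-\mathcal F$, and $\mathcal F:=\sum_iF^{i\bar i}=(n-k+1)\sigma_{k-1}(\lambda)\ge c(n,k)\,\sigma_k(\lambda)^{\frac{k-1}{k}}\ge c'>0$ by the Newton--Maclaurin inequality and $e^f\ge e^{-\sup_M|f|}$.

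Following \cite{TW5}, put $K:=\sup_M|\n u|^2+1$ and consider the test function
$$Q=\log\lambda_1+\varphi(|\n u|^2)+\psi(u),$$
with $\varphi(t)=-\tfrac12\log\bigl(1-\tfrac{t}{2K}\bigr)$ (so $\tfrac1{4K}\le\varphi'\le\tfrac1{2K}$ and $\varphi''=2(\varphi')^2$) and $\psi$ a convex function of $u$, to be multiplied by a large constant at the end. Let $x_0$ maximise $Q$. Since $\lambda_1$ may fail to be smooth, I first perturb $\omega_u$ near $x_0$ by a small constant diagonal matrix in a fixed frame so that the top eigenvalue becomes simple and $\lambda_1$ smooth, as in \cite{HMW}; then $\n_iQ(x_0)=0$ and $LQ(x_0)\le0$.

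I would then expand $LQ$ into three pieces. The term $L\psi=\psi'(ke^f-\mathcal F)+\psi''F^{i\bar i}|u_i|^2$ contributes a large positive multiple of $\mathcal F$ through $-\psi'\mathcal F$. For $\varphi'L(|\n u|^2)$, expanding the covariant derivatives, commuting by \eqref{three}, and inserting the once--differentiated equation $F^{i\bar i}u_{i\bar i l}=e^ff_l$ gives $\varphi'L(|\n u|^2)\ge\varphi'\sum_{i,p}F^{i\bar i}|u_{pi}|^2+\varphi'\sum_iF^{i\bar i}(\lambda_i-1)^2-C\mathcal F-C$, the first two terms being $\ge0$. For $L\log\lambda_1$, the second--variation formula for the simple eigenvalue gives
$$L\log\lambda_1=\frac1{\lambda_1}F^{i\bar i}u_{1\bar1i\bar i}+\frac1{\lambda_1}\sum_{p>1}\frac{F^{i\bar i}\bigl(|u_{1\bar pi}|^2+|u_{p\bar1i}|^2\bigr)}{\lambda_1-\lambda_p}-\frac{F^{i\bar i}|(\lambda_1)_i|^2}{\lambda_1^2}+(\text{curvature}\ast D^2u),$$
after which \eqref{four} exchanges $u_{1\bar1i\bar i}\leftrightarrow u_{i\bar i1\bar1}$, the equation is differentiated twice, and the concavity of $\sigma_k^{1/k}$ yields $F^{i\bar i}u_{i\bar i1\bar1}\ge -C$. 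The bad term $-F^{i\bar i}|(\lambda_1)_i|^2/\lambda_1^2$ is controlled against $\varphi''F^{i\bar i}|(|\n u|^2)_i|^2$ and $(\psi')^2F^{i\bar i}|u_i|^2$ using $\n_iQ(x_0)=0$, and the curvature terms are $O(\mathcal F)$ after division by $\lambda_1$.

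The one genuinely new difficulty — exactly the one flagged in the introduction — is the Hermitian torsion contribution $T\ast D^3u$: the terms $\tfrac1{\lambda_1}F^{i\bar i}\bigl(T^p_{i1}u_{p\bar i\bar1}+\overline{T^p_{i1}}\,u_{i\bar p1}\bigr)$ produced by \eqref{four}, together with the torsion term of $L(|\n u|^2)$. I would dispose of them by an index split at a small threshold $\delta$: if $\lambda_k>\delta\lambda_1$, the $\sigma_k$--inequalities of \cite{HMW} (compare the use of \cite{Wang} in the proof of Lemma 2.1) force $e^f=\sigma_k(\lambda)\ge c(n,k,\delta)\lambda_1^{\,k}$, so $\lambda_1\le C$ and we are done; otherwise $\lambda_1-\lambda_p\ge(1-\delta)\lambda_1$ for $p\ge k$, and, after rewriting the stray third derivatives via \eqref{three} (e.g. $u_{i\bar p1}=u_{1\bar pi}+O(\lambda_1)$, $u_{p\bar i\bar1}=\overline{u_{1\bar pi}}+O(\lambda_1)$), Cauchy--Schwarz absorbs them into the good third--order terms $\tfrac1{\lambda_1(\lambda_1-\lambda_p)}F^{i\bar i}|u_{1\bar pi}|^2$ and $\varphi'\sum_{i,p}F^{i\bar i}|u_{pi}|^2$, the leftover factors being $O(\mathcal F)$ by $F^{i\bar i}\le\mathcal F$, $|T|\le C$ and further $\sigma_k$--inequalities from \cite{HMW}; this is precisely where the lemmas of \cite{HMW} take the place of the determinant identities used in \cite{TW5}. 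Finally, with $\delta$ fixed, scaling $\psi$ large enough that $-\psi'\mathcal F$ beats all the accumulated $O(\mathcal F)$ errors (note $\varphi'|\n u|^2\le\tfrac12$, so the $\varphi$--weighted errors stay $O(\mathcal F)$), the inequality $LQ(x_0)\le0$ forces $\lambda_1(x_0)\le C(1+\sup_M|\n u|^2)$; since $\varphi(|\n u|^2)$ is bounded on $M$, this gives $Q\le C$ everywhere and hence \eqref{4.2}. The only delicate step is the index--splitting absorption of the $T\ast D^3u$ terms; everything else runs parallel to \cite{TW5}, with the $\sigma_k$--lemmas of \cite{HMW} inserted at the points that relied on positivity of the relevant form.
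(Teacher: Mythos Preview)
Your auxiliary function differs from the paper's: you take $Q=\log\lambda_1+\varphi(|\nabla u|^2)+\psi(u)$ with an eigenvalue perturbation, while the paper follows \cite{TW5} and adds the extra term $c_0\log\bigl(g^{k\bar l}w_{p\bar l}w_{k\bar q}\xi^p\bar\xi^q\bigr)$.  Both routes manufacture a good third--order term of the shape $\lambda_1^{-2}\sum_{p\ne1}F^{i\bar i}|u_{1\bar pi}|^2$ (the paper from the $c_0$ piece, you from the eigenvalue second variation), so this part of your plan is reasonable.  There are, however, two genuine gaps.

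First, your dichotomy is false.  The implication ``$\lambda_k>\delta\lambda_1\Rightarrow\sigma_k(\lambda)\ge c(n,k,\delta)\lambda_1^{\,k}$'' already fails for $n=3$, $k=2$: with $\lambda=\bigl(t,\,t,\,(1-t^2)/(2t)\bigr)$ one has $\sigma_1>0$, $\sigma_2=1$ and $\lambda_2=\lambda_1=t$, so $\lambda_k>\delta\lambda_1$ for every $\delta<1$ while $\lambda_1=t\to\infty$.  The paper (and \cite{HMW}) splits instead on $\lambda_n<-\varepsilon\lambda_1$ versus $\lambda_n\ge-\varepsilon\lambda_1$, and in the second case introduces the index set $I=\{i:F^{i\bar i}>\varepsilon^{-1}F^{1\bar1}\}$; this is not interchangeable with your split.

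Second, and more seriously, you discard the second--order Hessian of the operator by writing ``the concavity of $\sigma_k^{1/k}$ yields $F^{i\bar i}u_{i\bar i1\bar1}\ge-C$'' (note also the inconsistency: your $F^{i\bar j}=\partial\sigma_k/\partial w_{i\bar j}$ is the Hessian of $\sigma_k$, which is \emph{not} concave).  In the paper this Hessian is \emph{kept}: its off--diagonal piece $-\lambda_1^{-1}\sum_{i\ne1}F^{1\bar i,i\bar1}|u_{i\bar11}|^2$ is exactly what rescues Case~2.  The key \cite{HMW} inequality $-\lambda_1F^{1\bar i,i\bar1}\ge(1-2\varepsilon)F^{i\bar i}$ for $i\in I$ turns it into $(1-2\varepsilon)\lambda_1^{-2}\sum_{i\in I}F^{i\bar i}|u_{1\bar1i}-T^1_{1i}(\lambda_1-1)|^2$, and this is what absorbs the bad term $-\lambda_1^{-2}F^{i\bar i}|u_{1\bar1i}|^2$ \emph{after} its coefficient has been pushed past~$1$ by the $p=1$ torsion contribution $2\lambda_1^{-1}F^{i\bar i}\mathrm{Re}(T^1_{1i}u_{1\bar1\bar i})$.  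Your eigenvalue--variation good term only covers $p\ne1$, so this $p=1$ torsion must still be fed into the $|u_{1\bar1i}|^2/\lambda_1^2$ term; with the Hessian discarded you have no mechanism left to close the estimate for $i\in I$.  This is the heart of the second--order argument and is missing from your sketch.
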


\begin{proof}
Denote $w_{i\bar{j}} = g_{i\bar{j}} + u_{i\bar{j}}$ and let $\xi \in T^{1,0}M,\ |\xi|_g^2=1$.

We use the auxiliary function which is similar to the one in \cite{TW5}
  \[   H({x,\xi})=\log(w_{k\bar l}\xi^k\bar\xi^l)+c_0\log(g^{k\bar l}w_{p\bar l}  w_{k\bar q}\xi^p\bar\xi^q)+\varphi(|\triangledown u|^2_g)+\psi(u), \]
where $\varphi,\psi$ are given by
  \[\begin{split}
   \varphi \left( s \right) &=  - \frac{1}{2}\log \left( {1 - \frac{s}{{2K}}} \right),\quad 0 \le s \le K - 1,\\
   \psi \left( t \right) &=  - A\log \left( {1 + \frac{t}{{2L}}} \right),\quad  - L + 1 \le t \le 0,\end{split}\]
for
\[ K: = \mathop {\sup }\limits_M |\nabla u|_g^2  + 1, \quad
   L= \mathop {\sup }\limits_M \left| u \right| + 1,\  A: = 2L(C_0  + 1) ,\]
 where $A_0$ is a constant to be determined later. $c_0$ is a small positive constant depending only on $n$ and will be determined later.
By \cite{HMW}, we have
\begin{align}
   \frac{1}{2 K} & \ge \varphi' \ge \frac{1}{4 K} > 0, \ \varphi'' = 2 \big(\varphi')^2 > 0. \\
  \frac{A}{L} & \ge - \psi' \ge \frac{A}{2L} = C_0 +1,
  \ \psi''  \ge \frac{2\varepsilon_0}{1 - \varepsilon_0} (\psi')^2,\  \quad \textup{for all $\varepsilon_0 \le \frac{1}{2A + 1}$}.
\end{align}
These inequalities will be used below.

\ Suppose $H({x,\xi})$ attains its maximum at the point $x_0$ in the direction $\xi_0$, then we choose local coordinates
$\{\frac{\p}{\p z^1},\cdots,\frac{\p}{\p z^n}\}$ near $x_0$ such that
  \[ \begin{split}
  g_{i\bar j}(x_0)&=\delta_{ij},
  u_{i\bar j}=u_{i\bar i}(x_0)\delta_{ij},\\
  \lambda_i=w_{i\bar i}(x_0)&=1+u_{i\bar i}(x_0) \text{ with }  \lambda_1\geq\cdots\geq\lambda_n.
 \end{split}\]
We will prove  that
  \[ H({x_0,\xi})\leq H({x_0,\frac{\p}{\p z^1}})\quad \forall \xi \in T^{1,0}M,\ |\xi|_g^2=1,
  \sum\limits_{i,j}{w_{i\bar j}(x_0)\xi^i\xi^{\bar j}}>0 \]
by choosing $c_0$ small enough. In fact, at $x_0$ we have
  \[ \log(w_{k\bar l}\xi^k\bar\xi^l)+c_0\log(g^{k\bar l}w_{p\bar l}
   w_{k\bar q}\xi^p\bar\xi^q)= \log (\sum\limits_{k = 1}^n {w_{k\bar k } \left| {\xi ^k } \right|^2 } ) + c_0 \log (\sum\limits_{k = 1}^n {\left| {w_{k\bar k } } \right|^2 \left| {\xi ^k } \right|^2 } )\notag
   \]
If $w_{n\bar n}\ge-w_{1\bar 1}$ which is always satisfied when $n\le3$ , we have $w_{i\bar i}^2\leq w_{1\bar 1}$. Thus we have $H({x_0,\xi})\leq H({x_0,\frac{\p}{\p z^1}})$.

Now we suppose that $w_{n\bar n }< - w_{1\bar 1}$, thus we have $n\ge4$. Let $i_0$ be the smallest integer satisfying $w_{i\bar i }<-w_{1\bar 1 }$, then $i_0\ge k+1$. By $|w_{i\bar i }|<(n-2)w_{1\bar1}$ we have
  \[\begin{split}
     &\log (\sum\limits_{i=1}^n{w_{i\bar i}\left|{\xi ^i}\right|^2})+c_0\log(\sum\limits_{i=1}^n{\left|{w_{i\bar i}}\right|^2\left|{\xi^i}\right|^2})\\
  \le&\log w_{1\bar 1}(\sum\limits_{i=1}^{i_0-1}{\left|{\xi^i}\right|^2-\sum\limits_{i=i_0}^n{\left|{\xi^i}\right|^2}})
       +c_0\log(w_{1\bar 1}^2\sum\limits_{i=1}^{i_0-1}{\left|{\xi^i}\right|^2+(n-2)^2w_{1\bar1}^2\sum\limits_{i=1}^{i_0-1}{\left|{\xi^i}\right|^2}})\\
    =&\log w_{1\bar1}(1-2t)+c_0\log w_{1\bar1}^2(1-t+(n-2)^2t):=h(t),
  \end{split}\]
where $t=\sum\limits_{i = i_0 }^n {\left| {\xi ^i } \right|^2} \in(0,\frac{1}{2})$.

By choosing $c_0=\frac{2}{(n-2)^2-1}$, we have $h'(t)\leq0$,
thus \[h(t)\leq h(0)=\log (w_{1\bar 1 } ) + c_0 \log w_{1\bar 1 }^2.\]
Consequently, we have proved
  \[H({x_0,\xi})\leq H({x_0,\frac{\p}{\p z^1}}),\ \text{for}\  \forall \xi \in T^{1,0}M,
  | \xi |_g^2  = 1,\sum\limits_{i,j} {\eta _{i\bar j } (x_0 )\xi ^i \xi ^{\bar j } } > 0,\]
   by choosing $c_0=\frac{2}{(n-2)^2-1}$ when $n\ge 4$ and $c_0=1$ when $n\le 3$.

We extend $\xi_0$ near $x_0$ as
  \[\xi_0=(g_{1\bar1})^{\frac{1}{2}}\frac{\p}{\p z^1}.\]
Consider the function
  \[ Q(x)=H({x,\xi_0})=\log(g_{1\bar1}^{-1}w_{1\bar l})+c_0\log(g_{1\bar1}^{-1}g^{k\bar l}w_{1\bar l}
 w_{k\bar 1})+\varphi(|\triangledown u|^2_g)+\psi(u).\]
We will calculate $F^{i\bar j}Q_{i\bar j}\ at \ x_0$ to get the estimate, all the calculations are taken at $x_0$.
For simplicity, we denote $\xi=\xi_0$ in the following. By $\langle\xi,\bar\xi\rangle_g=| \xi |_g^2  = 1$, differentiating both sides, we obtain at $x_0$
\begin{align} \label{4.3}
0=\frac{\p}{\p z^i}\langle\xi,\bar\xi\rangle_g&=\langle\nabla_\frac{\p}{\p z^i}\xi,\bar\xi\rangle_g +\langle\xi,\nabla_\frac{\p}{\p z^i}\bar\xi\rangle_g\notag\\
&=\langle {\xi^k}_{,i}\frac{\p}{\p z^k},\bar{\xi^l}\frac{\p}{\bar\p z^l}
\rangle_g +\langle \xi^k\frac{\p}{\p z^k},\bar{\xi^l}_{,i}\frac{\p}{\bar\p z^l} \rangle_g \notag\\
&=g_{k\bar l}{\xi^k}_{,i}\bar{\xi^l} + g_{k\bar l}{\xi^k}{\bar{\xi^l}}_{,i}\notag\\
&={\xi^1}_{,i}+{\bar{\xi^1}}_{,i}.
\end{align}

We also have the basic formula for $\xi\in T^{1,0}M$:
\begin{align}\label{4.4}
{\bar {{\xi ^k}} _{,i}} = \frac{{\partial \bar {{\xi ^k}} }}{{\partial {z^i}}} = \overline {\frac{{\partial {\xi ^k}}}{{\partial \bar {{z^i}} }}}  = \overline {{\xi ^k}_{,\bar i }} ,\notag\\
{\xi ^k}_{,\bar i } = \frac{{\partial {\xi ^k}}}{{\partial \bar {{z^i}} }} = \overline {\frac{{\partial \bar {{\xi ^k}} }}{{\partial {z^i}}}}  = \overline {{{\bar {{\xi ^k}} }_{,i}}} \notag\\
{\overline {{\xi ^k}} _{,i}} = \frac{{\partial \bar {{\xi ^k}} }}{{\partial {z^i}}} = \overline {\frac{{\partial {\xi ^k}}}{{\partial \bar {{z^i}} }}}  = \overline {{\xi ^k}_{,\bar i }} ,\notag\\
{\xi ^k}_{,\bar i } = \frac{{\partial {\xi ^k}}}{{\partial \bar {z^i} }} = \overline {\frac{{\partial \bar {{\xi ^k}} }}{{\partial {z^i}}}}  = \overline {{{\overline {{\xi ^k}} }_{,i}}}
\end{align}
Direct calculations give
\begin{align*}
{Q_i} =& \frac{{{{\left( {{w_{k\bar l}}{\xi ^k}\bar {{\xi ^l}} } \right)}_i}}}{{{w_{k\bar l}}{\xi ^k}\bar {{\xi ^l}} }} + {c_0}\frac{{{{\left( {{g^{p\bar q}}{w_{k\bar q}}{w_{p\bar l}}{\xi ^k}\bar {{\xi ^l}} } \right)}_i}}}{{{g^{p\bar q}}{w_{k\bar q}}{w_{p\bar l}}{\xi ^k}\bar {{\xi ^l}} }} + {\varphi_i} + {\psi _i}\\
{Q_{i\bar i}} = &\frac{{{{\left( {{w_{k\bar l}}{\xi ^k}\bar {{\xi ^l}} } \right)}_i}_{\bar i}}}{{{w_{k\bar l}}{\xi ^k}\bar {{\xi ^l}} }} - \frac{{{{\left( {{w_{k\bar l}}{\xi ^k}\bar {{\xi ^l}} } \right)}_i}{{\left( {{w_{k\bar l}}{\xi ^k}\bar {{\xi ^l}} } \right)}_{\bar i}}}}{{{{\left( {{w_{k\bar l}}{\xi ^k}\bar {{\xi ^l}} } \right)}^2}}} + {c_0}\frac{{{{\left( {{g^{p\bar q}}{w_{k\bar q}}{w_{p\bar l}}{\xi ^k}\bar {{\xi ^l}} } \right)}_{i\bar i}}}}{{{g^{p\bar q}}{w_{k\bar q}}{w_{p\bar l}}{\xi ^k}\bar {{\xi ^l}} }} \notag\\
 &- {c_0}\frac{{{{\left( {{g^{p\bar q}}{w_{k\bar q}}{w_{p\bar l}}{\xi ^k}\bar {{\xi ^l}} } \right)}_i}{{\left( {{g^{p\bar q}}{w_{k\bar q}}{w_{p\bar l}}{\xi ^k}\bar {{\xi ^l}} } \right)}_{\bar i}}}}{{{{\left( {{g^{p\bar q}}{w_{k\bar q}}{w_{p\bar l}}{\xi ^k}\bar {{\xi ^l}} } \right)}^2}}} + {\varphi _{i\bar i}} + {\psi _{i\bar i}}
\end{align*}
Next, we will simplify $Q_i$ and $Q_{i\bar i}$.

 By $ \eqref{4.3}$, we have
\begin{align*}
{\left( {{w_{k\bar l}}{\xi ^k}\bar {{\xi ^l}} } \right)_i} &= {w_{k\bar l,i}}{\xi ^k}\bar {{\xi ^l}}  + {w_{k\bar l}}{\xi ^k}_{,i}\bar {{\xi ^l}}  + {w_{k\bar l}}{\xi ^k}{\bar {{\xi ^l}} _{,i}} \\
&= {w_{1\bar 1,i}} + {w_{1\bar 1}}\left( {{\xi ^1}_{,i} + {{\bar {{\xi ^1}} }_{,i}}} \right) \\
&= {w_{1\bar 1i}},
\end{align*}
Thus we have
\begin{align*}
{\left( {{g^{p\bar q}}{w_{k\bar q}}{w_{p\bar l}}{\xi ^k}\bar {{\xi ^l}} } \right)_i} &= {g^{p\bar q}}{w_{k\bar qi}}{w_{p\bar l}}{\xi ^k}\bar {{\xi ^l}}  + {g^{p\bar q}}{w_{k\bar q}}{w_{p\bar li}}{\xi ^k}\bar {{\xi ^l}}  + {g^{p\bar q}}{w_{k\bar q}}{w_{p\bar l}}{\xi ^k}_i\bar {{\xi ^l}}  + {g^{p\bar q}}{w_{k\bar q}}{w_{p\bar l}}{\xi ^k}{\bar {{\xi ^l}} _{,i}}\\
 &= {w_{1\bar 1}}\left( {{w_{1\bar 1i}} + {w_{1\bar 1i}}} \right) + {w_{1\bar 1}}^2\left( {{\xi ^1}_{,i} + {{\bar {{\xi ^1}} }_{,i}}} \right) \\
 &= 2{w_{1\bar 1}}{w_{1\bar 1i}}.
 \end{align*}
Therefore, we obtain the simplified formula for the term $Q_i$ at $x_0$.
   \begin{equation}\label{firstordercondition}
   {Q_i} = \frac{{{w_{1\bar 1i}}}}{{{w_{1\bar 1}}}} + {c_0}\frac{{2{w_{1\bar 1i}}}}{{{w_{1\bar 1}}}} + {\varphi _i} + {\psi _i} = (1 + 2{c_0})\frac{{{w_{1\bar 1i}}}}{{{w_{1\bar 1}}}} + {\varphi _i} + {\psi _i} = 0
   \end{equation}

Similar calculations give
\begin{align*}
{\left( {{w_{k\bar l}}{\xi ^k}\bar {{\xi ^l}} } \right)_i}_{\bar i} =& {\left[ {{w_{k\bar li}}{\xi ^k}\bar {{\xi ^l}}  + {w_{k\bar l}}\left( {{\xi ^k}_i\bar {{\xi ^l}}  + {\xi ^k}{{\bar {{\xi ^l}} }_i}} \right)} \right]_{\bar i}}\\
=& {w_{k\bar li\bar i}}{\xi ^k}\bar {{\xi ^l}}  + {w_{k\bar li}}\left( {{\xi ^k}_{\bar i}\bar {{\xi ^l}}  + {\xi ^k}{{\bar {{\xi ^l}} }_{\bar i}}} \right) + {w_{k\bar l\bar i}}\left( {{\xi ^k}_i\bar {{\xi ^l}}  + {\xi ^k}{{\bar {{\xi ^l}} }_i}} \right)\\
 &+ {w_{k\bar l}}\left( {{\xi ^k}_{i\bar i}\bar {{\xi ^l}}  + {\xi ^k}_{\bar i}{{\bar {{\xi ^l}} }_i} + {\xi ^k}_i{{\bar {{\xi ^l}} }_{\bar i}} + {\xi ^k}{{\bar {{\xi ^l}} }_{i\bar i}}} \right)\\
=& {w_{1\bar 1i\bar i}} + {w_{k\bar 1i}}{\xi ^k}_{\bar i} + {w_{1\bar li}}{\bar {{\xi ^l}} _{\bar i}} + {w_{k\bar 1\bar i}}{\xi ^k}_i + {w_{1\bar l\bar i}}{\bar {{\xi ^l}} _i}\\
 &+ {w_{1\bar 1}}({\xi ^1}_{i\bar i} + {\bar {{\xi ^1}} _{i\bar i}}) + {w_{k\bar k}}({\xi ^k}_{\bar i}{\bar {{\xi ^k}} _i} + {\xi ^k}_i{\bar {{\xi ^k}} _{\bar i}})\\
=& {w_{1\bar 1i\bar i}} + 2\sum\limits_{k \ne 1} {{\mathop{\rm Re}\nolimits} ({w_{k\bar 1i}}{\xi ^k}_{\bar i} + {w_{1\bar ki}}\bar {{\xi ^k}_i} )}  + {w_{1\bar 1}}({\xi ^1}_{i\bar i} + {\bar {{\xi ^1}} _{i\bar i}}) + {w_{k\bar k}}({\left| {{\xi ^k}_{\bar i}} \right|^2} + {\left| {{\xi ^k}_i} \right|^2}).
\end{align*}
The last equality holds because we have used $\eqref{4.2}$ and $\eqref{4.3}$ and the fact
\begin{align*}
{w_{k\bar 1i}}{\xi ^k}_{\bar i} + {w_{1\bar l\bar i}}{\bar {{\xi ^l}} _i} &= 2{\mathop{\rm Re}\nolimits} ({w_{k\bar 1i}}{\xi ^k}_{\bar i}),\\
{w_{1\bar li}}{\bar {{\xi ^l}} _{\bar i}} + {w_{k\bar 1\bar i}}{\xi ^k}_i & = 2{\mathop{\rm Re}\nolimits} ({w_{1\bar ki}}\bar {{\xi ^k}_i} ).\\
\end{align*}

We can also calculate 
 \begin{align*}
{\left( {{g^{p\bar q}}{w_{k\bar q}}{w_{p\bar l}}{\xi ^k}\bar {{\xi ^l}} } \right)_{i\bar i}}&={g^{p\bar q}}{\left( {{w_{k\bar qi}}{w_{p\bar l}}{\xi ^k}\bar {{\xi ^l}}  + {w_{k\bar q}}{w_{p\bar li}}{\xi ^k}\bar {{\xi ^l}}  + {w_{k\bar q}}{w_{p\bar l}}{\xi ^k}_i\bar {{\xi ^l}}  + {w_{k\bar q}}{w_{p\bar l}}{\xi ^k}{{\bar {{\xi ^l}} }_i}} \right)_{\bar i}}\\
=& {g^{p\bar q}}\left( {{w_{k\bar qi\bar i}}{w_{p\bar l}}{\xi ^k}\bar {{\xi ^l}}  + {w_{k\bar qi}}{w_{p\bar l\bar i}}{\xi ^k}\bar {{\xi ^l}}  + {w_{k\bar qi}}{w_{p\bar l}}{\xi ^k}_{\bar i}\bar {{\xi ^l}}  + {w_{k\bar qi}}{w_{p\bar l}}{\xi ^k}{{\bar {{\xi ^l}} }_{\bar i}}} \right)\\
 &+ {g^{p\bar q}}\left( {{w_{k\bar q\bar i}}{w_{p\bar li}}{\xi ^k}\bar {{\xi ^l}}  + {w_{k\bar q}}{w_{p\bar li\bar i}}{\xi ^k}\bar {{\xi ^l}}  + {w_{k\bar q}}{w_{p\bar li}}{\xi ^k}_{\bar i}\bar {{\xi ^l}}  + {w_{k\bar q}}{w_{p\bar li}}{\xi ^k}{{\bar {{\xi ^l}} }_{\bar i}}} \right)\\
 &+ {g^{p\bar q}}\left( {{w_{k\bar q\bar i}}{w_{p\bar l}}{\xi ^k}_i\bar {{\xi ^l}}  + {w_{k\bar q}}{w_{p\bar l\bar i}}{\xi ^k}_i\bar {{\xi ^l}}  + {w_{k\bar q}}{w_{p\bar l}}{\xi ^k}_{i\bar i}\bar {{\xi ^l}}  + {w_{k\bar q}}{w_{p\bar l}}{\xi ^k}_i{{\bar {{\xi ^l}} }_{\bar i}}} \right)\\
 &+ {g^{p\bar q}}\left( {{w_{k\bar q\bar i}}{w_{p\bar l}}{\xi ^k}{{\bar {{\xi ^l}} }_i} + {w_{k\bar q}}{w_{p\bar l\bar i}}{\xi ^k}{{\bar {{\xi ^l}} }_i} + {w_{k\bar q}}{w_{p\bar l}}{\xi ^k}_{\bar i}{{\bar {{\xi ^l}} }_i} + {w_{k\bar q}}{w_{p\bar l}}{\xi ^k}{{\bar {{\xi ^l}} }_{i\bar i}}} \right)\\
=& {w_{1\bar 1i\bar i}}{w_{1\bar 1}} + {w_{1\bar pi}}{w_{p\bar 1\bar i}} + {w_{k\bar 1i}}{w_{1\bar 1}}{\xi ^k}_{\bar i} + {w_{1\bar pi}}{w_{p\bar p}}{\bar {{\xi ^p}} _{\bar i}}\\
 &+ {w_{1\bar p\bar i}}{w_{p\bar 1i}} + {w_{1\bar 1}}{w_{1\bar 1i\bar i}} + {w_{p\bar p}}{w_{p\bar 1i}}{\xi ^p}_{\bar i} + {w_{1\bar 1}}{w_{1\bar li}}{\bar {{\xi ^l}} _{\bar i}}\\
 &+ {w_{k\bar 1\bar i}}{w_{1\bar 1}}{\xi ^k}_i + {w_{p\bar p}}{w_{p\bar 1\bar i}}{\xi ^p}_i{\rm{ + }}{w_{1\bar 1}}^2{\xi ^1}_{i\bar i}{\rm{ + }}{w_{p\bar p}}^2{\xi ^{\rm{p}}}_i{\bar {{\xi ^p}} _{\bar i}}\\
 &+ {w_{1\bar p\bar i}}{w_{p\bar p}}{\bar {{\xi ^p}} _i} + {w_{1\bar 1}}{w_{1\bar l\bar i}}{\bar {{\xi ^l}} _i} + {w_{p\bar p}}^2{\xi ^{\rm{p}}}_{\bar i}{\bar {{\xi ^p}} _i} + {w_{1\bar 1}}^2{\bar {{\xi ^1}} _{i\bar i}}\\
=& 2{w_{1\bar 1}}{w_{1\bar 1i\bar i}} + {\left| {{w_{1\bar pi}}} \right|^2} + {\left| {{w_{1\bar p\bar i}}} \right|^2} + 2{w_{1\bar 1}}{\mathop{\rm Re}\nolimits} ({w_{p\bar 1i}}{\xi ^p}_{\bar i} + {w_{p\bar 1\bar i}}{\xi ^p}_i)\\
 &+ 2{w_{p\bar p}}{\mathop{\rm Re}\nolimits} ({w_{1\bar pi}}{\bar {{\xi ^p}} _{\bar i}} + {w_{p\bar 1i}}{\xi ^p}_{\bar i}) + {w_{p\bar p}}^2\left( {{{\left| {{\xi ^{\rm{p}}}_i} \right|}^2} + {{\left| {{\xi ^{\rm{p}}}_{\bar i}} \right|}^2}} \right) + {w_{1\bar 1}}^2({\xi ^1}_{i\bar i}{\rm{ + }}{\bar {{\xi ^1}} _{i\bar i}})
 \end{align*}
Therefore we have simplify
$Q_{i\bar i}$ at $x_0$ as follows
\begin{align*}
{Q_{i\bar i}}=& (1 + 2{c_0})\frac{{{w_{1\bar 1i\bar i}}}}{{{w_{1\bar 1}}}} + \frac{{{{\rm{c}}_0}}}{{{w_{1\bar 1}}^2}}\sum\limits_{p \ne 1} {\left( {{{\left| {{w_{1\bar pi}}} \right|}^2} + {{\left| {{w_{1\bar p\bar i}}} \right|}^2}} \right)} \\
 &- (1 + 2{c_0})\frac{{{{\left| {{w_{1\bar 1i}}} \right|}^2}}}{{{w_{1\bar 1}}^2}}{\rm{ + }}{\left( { *  * } \right)_{i\bar i}} + {\varphi _{i\bar i}} + {\psi _{i\bar i}} ,
\end{align*}
where $\left( { *  * } \right)_{i\bar i}$ is given by
\begin{align*}\
{\left( { *  * } \right)_{i\bar i}} =& \frac{2}{{{w_{1\bar 1}}}}\sum\limits_{k \ne 1} {{\mathop{\rm Re}\nolimits} ({w_{k\bar 1i}}{\xi ^k}_{\bar i} + {w_{1\bar ki}}\bar {{\xi ^k}_i} )}  + {\xi ^1}_{i\bar i} + {\bar {{\xi ^1}} _{i\bar i}} + \frac{{{w_{k\bar k}}}}{{{w_{1\bar 1}}}}({\left| {{\xi ^k}_{\bar i}} \right|^2} + {\left| {{\xi ^k}_i} \right|^2})\\
 &+ \frac{{{\rm{2}}{{\rm{c}}_0}}}{{{w_{1\bar 1}}}}\sum\limits_{p \ne 1} {{\mathop{\rm Re}\nolimits} ({w_{p\bar 1i}}{\xi ^p}_{\bar i} + {w_{p\bar 1\bar i}}{\xi ^p}_i)}  + \sum\limits_{p \ne 1} {\frac{{{\rm{2}}{{\rm{c}}_0}{w_{p\bar p}}}}{{{w_{1\bar 1}}^2}}{\mathop{\rm Re}\nolimits} ({w_{1\bar pi}}{{\bar {{\xi ^p}} }_{\bar i}} + {w_{p\bar 1i}}{\xi ^p}_{\bar i})} \\
 &+ \frac{{{\rm{2}}{{\rm{c}}_0}{w_{p\bar p}}^2}}{{{w_{1\bar 1}}^2}}\left( {{{\left| {{\xi ^{\rm{p}}}_i} \right|}^2} + {{\left| {{\xi ^{\rm{p}}}_{\bar i}} \right|}^2}} \right) + {c_0}({\xi ^1}_{i\bar i}{\rm{ + }}{\bar {{\xi ^1}} _{i\bar i}}).
\end{align*}
For this term $\left( { *  * } \right)_{i\bar i}$, we have the  following estimate
\begin{align*}\
{\left( { *  * } \right)_{i\bar i}} \ge  - \frac{{{{\rm{c}}_0}}}{{2{w_{1\bar 1}}^2}}\sum\limits_{p \ne 1} {\left( {{{\left| {{w_{1\bar pi}}} \right|}^2} + {{\left| {{w_{1\bar p\bar i}}} \right|}^2}} \right)}  - C,
\end{align*}
where $C$ is a positive constant depending only on $(M,\omega)$.

Let
\[
    F (\omega_u) = \big(\sigma_k (\omega_u)\big)^{1/k}.
\]

We denote by
\[
   F^{i\bar{j}} = \frac{\partial F}{\partial w_{i\bar{j}}}, \qquad F^{i\bar{j},p\bar{q}} = \frac{\partial^2 F}{\partial w_{i\bar{j}}
    \partial w_{p\bar{q}}},
\]
where $(w_u)_{i\bar{j}} = g_{i\bar{j}} + u_{i\bar{j}}$. Then, the
positive definite matrix $(F^{i\bar{j}}(\omega_u))$ is
diagonalized at the point $x_0$.
 More precisely, we have
\begin{equation} \label{eq:1stF}
    F^{i\bar{j}}(\omega_u) = \delta_{ij} F^{i\bar{i}}(\omega_u) = \frac{1}{k} \big[\sigma_k(\lambda)\big]^{1/k-1} \sigma_{k-1}(\lambda| i)\delta_{ij}.
\end{equation}
Furthermore, at $x_0$,
\begin{equation}\label{eq:nor2ndF}
   F^{i\bar{j},p\bar{q}}(\omega_u) =
   \begin{cases}
      F^{i\bar{i},p\bar{p}}, & \textup{if $i=j$, $p=q$};\\
      F^{i\bar{p},p\bar{i}}, & \textup{if $i=q$, $p=j$, $i \ne p$};\\
      0, & \textup{otherwise},
   \end{cases}
\end{equation}
in which
\begin{align*}
  F^{i\bar{i},p\bar{p}}
  & = \frac{1}{k} \big[\sigma_k(\lambda)\big]^{1/k-1} (1 - \delta_{ip})\sigma_{k-2}(\lambda|ip) \\
  &  \quad + \frac{1}{k}(\frac{1}{k}-1) \big[\sigma_{k}(\lambda)\big]^{1/k - 2} \sigma_{k-1}(\lambda|i) \sigma_{k-1}(\lambda|p), \\
  F^{i\bar{p},p\bar{i}}
  & = - \frac{1}{k} \big[\sigma_k(\lambda)\big]^{1/k-1} \sigma_{k-2}(\lambda|ip).
\end{align*}

Here and in the follows, $\sigma_r(\lambda|i_1\ldots i_l)$, with $i_1,\ldots, i_l$ being distinct, stands for the $r$--th symmetric function with $\lambda_{i_1} = \cdots = \lambda_{i_l} = 0$.

We have, in addition at $x_0$,
\begin{equation} \label{eq:sumF}
   \sum_{i=1}^n F^{i\bar{i}}w_{i\bar{i}} = \sum_{i=1}^n F^{i\bar{i}} \lambda_i = \sigma_k^{1/k} = e^{\frac{f}{k}}.
\end{equation}

Thus by maximum principal, we have
\begin{align}\label{keyformula}
0\ge F^{i\bar j } Q_{i\bar j }=&F^{i\bar i } Q_{i\bar i }\\
\ge& (1 + 2c_0 )\sum\limits_{i = 1}^n
 {\frac{{F^{i\bar i } u_{1\bar 1 i\bar i } }}{{w_{1\bar 1 } }}}
+ \frac{{c_0 }}{2}
   \sum\limits_{i = 1}^n {\sum\limits_{p \ne 1}
    {\frac{{F^{i\bar i } |u_{1\bar p i} |^2 }}
    {{w_{1\bar 1 }^2 }}} }  \notag\\
  &- (1 + 2c_0 )\sum\limits_{i = 1}^n {\frac{{F^{i\bar i }
   |u_{1\bar 1 i} |^2 }}{{w_{1\bar 1 } ^2 }}}
    + \psi '\sum\limits_{i = 1}^n {F^{i\bar i } u_{i\bar i } }
    + \psi ''\sum\limits_{i = 1}^n {F^{i\bar i } |u_i } |^2  \notag \\
  &+ \varphi''\sum\limits_{i = 1}^n {F^{i\bar i }
    |\nabla u} |_i^2 |\nabla u|_{\bar i }^2
  +\varphi '\sum\limits_{i,p = 1}^n {F^{i\bar i }
 \left( {|u_{p\bar i } |^2  + |u_{pi} |^2 } \right)}\notag\\
 &+\varphi'\sum\limits_{i,p = 1}^n {F^{i\bar i } }
 (u_{p\bar i i} u_{\bar p }
 + u_{\bar p i\bar i } u_{p } )
 - C_1 \sum\limits_{i = 1}^n {F^{i\bar i } }\notag\\
 &:=I_1+I_2+I_3+I_4+I_5+I_6+I_7+I_8 \notag
 \end{align}
The equation can be written as
\[F(\omega_u)= e^{\frac{f}{k}}:=h\]
Differentiate the above equation , we obtain
\begin{align*}
\sum\limits_{i,j = 1}^nF^{i\bar j } u_{i\bar j l} & = \nabla _l F = h_l  ,\\
 \sum\limits_{i,j = 1}^nF^{i\bar j } u_{i\bar j l\bar m }
 &+ \sum\limits_{i,j,p,q = 1}^nF^{i\bar j ,p\bar q } u_{i\bar j l}
  u_{p\bar q \bar m}= h_{l\bar m } . \\
\end{align*}
and
\begin{align*}
\sum\limits_{i = 1}^n {F^{i\bar i } u_{i\bar i 1\bar 1 } }
&= h_{1\bar 1 }  - \sum\limits_{i,j,p,q = 1}^n
{F^{i\bar j ,p\bar q } u_{i\bar j 1}
u_{p\bar q \bar 1 }}.\\
\end{align*}
By commuting the covariant derivatives  formula $\eqref{four}$, we have
\begin{align}\label{above}
 \sum\limits_{i = 1}^nF^{i\bar i } u_{1\bar 1 i\bar i }
=& \sum\limits_{i = 1}^nF^{i\bar i } u_{i\bar i 1\bar 1 }
  + \sum\limits_{i = 1}^nF^{i\bar i } \left( {u_{1\bar 1 }
  - \sum\limits_{i = 1}^nu_{i\bar i } } \right)R_{i\bar i 1\bar 1 }  \\
  &+ \sum\limits_{i = 1}^nF^{i\bar i } \left( \sum\limits_{p = 1}^n{T_{1i}^p
u_{p\bar 1 \bar i }+ \sum\limits_{q = 1}^n\bar {T_{1i}^q }
u_{1\bar q i}- \sum\limits_{p = 1}^n|T_{1i}^p |^2 u_{p\bar p } } \right). \notag
 \end{align}
Inserting $\eqref{above}$ into the term  $I_1$, we have
\begin{align}\label{keyformula1}
 I_1=& (1 + 2c_0 )\sum\limits_{i = 1}^n {\frac{{F^{i\bar i }u_{1\bar 1 i\bar i } }}{{w_{1\bar 1 } }}}  \\
=& (1 + 2c_0 )\sum\limits_{i = 1}^n {\frac{{F^{i\bar i }
  u_{i\bar i 1\bar 1 } }}{{w_{1\bar 1 } }}}
  + (1 + 2c_0 )\sum\limits_{i = 1}^n {\frac{{F^{i\bar i }
  \left( {u_{1\bar 1 }  - u_{i\bar i } } \right)
  R_{i\bar i 1\bar 1 } }}{{w_{1\bar 1 } }}} \notag \\
  &+ 2(1 + 2c_0 )\sum\limits_{i,p = 1}^n {F^{i\bar i } }
  {\mathop{\rm Re}\nolimits} \left( {\frac{{T_{1i}^p
   u_{p\bar 1 \bar i } }}{{w_{1\bar 1 } }}} \right)
  - (1 + 2c_0 )\sum\limits_{i,p = 1}^n
   {F^{i\bar i } } \frac{{|T_{1i}^p |^2 u_{p\bar p } }}
   {{w_{1\bar 1 } }} \notag\\
=& (1 + 2c_0 )\frac{{h_{1\bar 1 } }}{{w_{1\bar 1 } }}
  - (1 + 2c_0 )\sum\limits_{i,j,p,q = 1}^n {\frac{{F^{i\bar j ,p\bar q }
   u_{i\bar j 1} u_{p\bar q \bar1} }}{{w_{1\bar 1 } }}} \notag \\
  &+ (1 + 2c_0 )\sum\limits_{i = 1}^n {\frac{{F^{i\bar i }
  \left( {u_{1\bar 1 }  - u_{i\bar i } }
  \right)R_{i\bar i 1\bar 1 } }}{{w_{1\bar 1 } }}}
   + 2(1 + 2c_0 )\sum\limits_i^n {F^{i\bar i } } {\mathop{\rm Re}\nolimits}
   \left( {\frac{{T_{1i}^1 u_{1\bar 1 \bar i } }}
   {{w_{1\bar 1 } }}}\right) \notag\\
  &+ 2(1 + 2c_0 )\sum\limits_{i = 1}^n {F^{i\bar i } }
  {\mathop{\rm Re}\nolimits} \left( {\sum\limits_{p \ne 1}
   {\frac{{T_{1i}^p u_{p\bar 1 \bar i } }}{{w_{1\bar 1 } }}} }
    \right) - (1 + 2c_0 )\sum\limits_{i,p = 1}^n {F^{i\bar i } }
    \frac{{|T_{1i}^p |^2 u_{p\bar p } }}{{w_{1\bar 1 } }} \notag\\
:=& I_{11}+I_{12}+I_{13}+I_{14}+I_{15}+I_{16} .\notag
\end{align}
Next we estimate each term of $(1)$ as follows,firstly we have
   \[I_{11}+I_{13}+I_{16}  \ge-C_1  - 3\left( {nC_2  + C_3 } \right)\sum\limits_{i = 1}^n {F^{i\bar i } } ,\]
where we have supposed that $\sup\limits_M |T|_g^2  \le C_2,\sup\limits_M |R| \le C_3 $.

 Next we claim $I_{15}+I_{2}\geq-18n^2C_2\sum\limits_{i = 1}^n{F^{i\bar i}} .$
In fact,
\begin{align*}
 I_{15}+I_{16} &= \frac{{c_0 }}{2}\sum\limits_{i = 1}^n {\sum\limits_{p \ne 1} {\frac{{F^{i\bar i } |u_{1\bar p i} |^2 }}{{w_{1\bar 1 }^2 }} + } } 2(1 + 2c_0 )\sum\limits_{i = 1}^n {F^{i\bar i } } {\mathop{\rm Re}\nolimits} \left( {\sum\limits_{p \ne 1} {\frac{{T_{1i}^p u_{p\bar 1  \bar i } }}{{w_{1\bar 1 } }}} } \right) \\
  &= \frac{{c_0 }}{2}\sum\limits_{i = 1}^n {F^{i\bar i } \sum\limits_{p \ne 1} {\left|\frac{{u_{1\bar p i} }}{{w_{1\bar 1 } }} + \frac{{2(1 + 2c_0 )}}{{c_0 }}T_{1i}^p \right|} } ^2  - \frac{{2(1 + 2c_0 )^2 }}{{c_0 }}\sum\limits_{i = 1}^n {\sum\limits_{p \ne 1} {F^{i\bar i } |T_{1i}^p |^2 } }  \\
  &\ge  - \frac{{2(1 + 2c_0 )^2 }}{{c_0 }}\sum\limits_{i = 1}^n {\sum\limits_{p \ne 1} {F^{i\bar i } |T_{1i}^p |^2 } }  \\
  &\ge  -18n^2 C_2 \sum\limits_{i = 1}^n {F^{i\bar i } },
 \end{align*}
where we have used $\frac{1}{n^2}\le c_0\le1 $
Thus, we obtain,
\begin{align}\label{keyformula2}
 I_1+I_2\ge&  - (1 + 2c_0 )\sum\limits_{i,j,p,q = 1}^n {\frac{{F^{i\bar j ,p\bar q } u_{i\bar j 1} u_{p\bar q \bar1} }}{{w_{1\bar 1 } }}}  + 2(1 + 2c_0 )\sum\limits_i^n {F^{i\bar i } } {\mathop{\rm Re}\nolimits} \left( {\frac{{T_{1i}^1 u_{1\bar 1 \bar i } }}{{w_{1\bar 1 } }}} \right) \\&- \left( {21n^2C_2  +3 C_3 } \right)\sum\limits_{i = 1}^n {F^{i\bar i } }  - C_1 .\notag
\end{align}
 For terms  $I_7+I_8$, we claim
 \begin{align}\label{keyformula3}
I_7+I_8 \ge \frac{1}{2}\varphi '\sum\limits_{i = 1}^n
 {F^{i\bar i } |u_{i\bar i } |^2 _1  - \left( {C_2  + C_3 } \right)
 \sum\limits_{i = 1}^n {F^{i\bar i } } - C_1 } .
\end{align}

 Indeed, by the covariant derivatives' commutation formula $\eqref{three}$ in section 2, we have
 \begin{align*}
 u_{pi\bar i }  =u_{i\bar i p}  + T_{pi}^i u_{i\bar i }  + u_q R_{i\bar i p\bar q } ,
 u_{\bar p i\bar i }  = u_{i\bar p \bar i }  = u_{i\bar i \bar p }  - \overline {T_{ip}^i } u_{i\bar i } . \\
\end{align*}
 Then we have
 \begin{align*}
 \sum\limits_{i = 1}^n {F^{i\bar i } } u_{pi\bar i }  &= \sum\limits_{i = 1}^n {F^{i\bar i } } u_{i\bar i p}  + \sum\limits_{i = 1}^n {F^{i\bar i } } \left( {T_{pi}^i u_{i\bar i }  + u_q R_{i\bar i p\bar q } } \right) = F_p  + \sum\limits_{i = 1}^n {F^{i\bar i } } \left( {T_{pi}^i u_{i\bar i }  + u_q R_{i\bar i p\bar q } } \right) \\
\sum\limits_{i = 1}^n {F^{i\bar i } } u_{\bar p i\bar i }  &= \sum\limits_{i = 1}^n {F^{i\bar i } } u_{i\bar i \bar p }  + \sum\limits_{i = 1}^n {F^{i\bar i } } \overline {T_{ip}^i } u_{i\bar i } = F_{\bar p }  + \sum\limits_{i = 1}^n {F^{i\bar i } } \overline {T_{ip}^i } u_{i\bar i }  \\
 \end{align*}
Inserting the above formula into the term $(8)$, we obtain
 \begin{align}\label{keyformula4}
 I_8& = \varphi '\sum\limits_{i,p = 1}^n {F^{i\bar i } } (u_{p\bar i i} u_{\bar p }  + u_{\bar p i\bar i } u_p ) \\
  &= \varphi '\sum\limits_{p = 1}^n {u_{\bar p } \left[ {F_p  + \sum\limits_{i = 1}^n {F^{i\bar i } } \left( {T_{pi}^i u_{i\bar i }  + u_q R_{i\bar i p\bar q } } \right)} \right]}  + \varphi '\sum\limits_{p = 1}^n {u_p \left[ {h_{\bar p }  - \sum\limits_{i = 1}^n {F^{i\bar i } } \overline {T_{ip}^i } u_{i\bar i } } \right]}  \notag\\
  &= 2\varphi '\sum\limits_{i,p = 1}^n {F^{i\bar i } u_{i\bar i } {\mathop{\rm Re}\nolimits} \left( {u_{\bar p } T_{pi}^i } \right)}  + \varphi '\sum\limits_{p = 1}^n {\left[ {2{\mathop{\rm Re}\nolimits} \left( {u_{\bar p } h_p } \right) + \sum\limits_{i,q = 1}^n {u_{\bar p } u_q F^{i\bar i } R_{i\bar i p\bar q } } } \right]}  \notag\\
  &= I_{81}+I_{82} .\notag
 \end{align}
For the term $I_{82}$, we have
\[
 I_{82}  \ge  - C_3 \sum\limits_{i = 1}^n {F^{i\bar i } }  - C_1  .\]
 For the term $I_{81}$, we obtain
 \begin{align*}
 I_{81}  + I_{7} &= 2\varphi '\sum\limits_{i,p = 1}^n {F^{i\bar i } u_{i\bar i } {\mathop{\rm Re}\nolimits} \left( {u_{\bar p } T_{pi}^i } \right)}  + \varphi '\sum\limits_{i,p = 1}^n {F^{i\bar i } \left( {|u_{p\bar i } |^2  + |u_{pi} |^2 } \right)}  \\
  &\ge \varphi '\sum\limits_{i = 1}^n {F^{i\bar i } \left[ {|u_{i\bar i } |^2  + 2u_{i\bar i } {\mathop{\rm Re}\nolimits} \left( {\sum\limits_{p = 1}^n {u_{\bar p } T_{pi}^i } } \right)} \right]}  \\
  &= \varphi '\sum\limits_{i = 1}^n {F^{i\bar i } \left|\frac{{u_{i\bar i } }}{2} + 2\sum\limits_{p = 1}^n {u_p \overline {T_{pi}^i } } \right|^2 }  + \frac{3}{4}\varphi '\sum\limits_{i = 1}^n {F^{i\bar i } \left| {u_{i\bar i } } \right|^2 }  - 4\varphi '\sum\limits_{i = 1}^n {F^{i\bar i } } \left| {\sum\limits_{p = 1}^n {u_p \overline {T_{pi}^i } } } \right|^2  \\
  &\ge \frac{1}{2}\varphi '\sum\limits_{i = 1}^n {F^{i\bar i } \left| {u_{i\bar i } } \right|^2 }  - C_2 \sum\limits_{i = 1}^n {F^{i\bar i } }.
\end{align*}
Thus we have proved the above claim $\eqref{keyformula3}$.

Moreover, apply $\eqref{eq:sumF}$ to obtain
\begin{align*}
\psi '\sum\limits_{i = 1}^n {F^{i\bar i } u_{i\bar i } }
=\psi '\sum\limits_{i = 1}^n {F^{i\bar i }(\lambda_i-1)} =\psi 'h-\psi '\sum\limits_{i = 1}^n F^{i\bar i }
\ge -2(C_0+1)\sup_M{{e^{\frac {f}{k}}}}+\psi '\sum\limits_{i = 1}^n F^{i\bar i }
\end{align*}
Similarly,
\begin{align*}
\frac{1}{2}\varphi '&\sum\limits_{i = 1}^n {F^{i\bar i} \left| {u_{i\bar i} } \right|^2 }  = \frac{1}{2}\varphi '\sum\limits_{i = 1}^n {F^{i\bar i} \left( {\lambda _i  - 1} \right)^2  }\\
&=  \frac{1}{2}\varphi '\sum\limits_{i = 1}^n {F^{i\bar i} \lambda _i ^2 }  - \varphi '\sum\limits_{i = 1}^n {F^{i\bar i} \lambda _i }  + \frac{1}{2}\varphi '\sum\limits_{i = 1}^n {F^{i\bar i} }  \\
  &= \frac{1}{2}\varphi '\sum\limits_{i = 1}^n {F^{i\bar i} \lambda _i ^2 }  - \varphi 'h + \frac{1}{2}\varphi '\sum\limits_{i = 1}^n {F^{i\bar i} }\\
 &\ge \frac{1}{2}\varphi '\sum\limits_{i = 1}^n {F^{i\bar i} \lambda _i ^2 }  - \frac{1}{2}\sup_M{{e^{\frac {f}{k}}}} + \frac{1}{2}\varphi '\sum\limits_{i = 1}^n {F^{i\bar i} }\\
 \end{align*}
Inserting these terms into \eqref{keyformula}, we obtain
\begin{align}\label{FQ}
 0 \ge F^{i\bar i } Q_{i\bar i }  \ge&
 - (1 + 2c_0 )\sum\limits_{i,j,p,q = 1}^n {\frac{{F^{i\bar j ,p\bar q } u_{i\bar j  1} u_{p\bar q \bar 1} }}{{w_{1\bar 1 } }}}  + 2(1 + 2c_0 )\sum\limits_{i = 1}^n {F^{i\bar i } } {\mathop{\rm Re}\nolimits} \left( {\frac{{T_{1i}^1 u_{1\bar 1 \bar i } }}{{w_{1\bar 1 } }}} \right)
  - (1 + 2c_0 )\sum\limits_{i = 1}^n {\frac{{F^{i\bar i }
  |u_{1\bar 1 i} |^2 }}{{w_{1\bar 1 } ^2 }}}\\
  &+ \varphi ''\sum\limits_{i = 1}^n {F^{i\bar i } |\nabla u} |_i^2 |\nabla u|_{\bar i }^2
  + \psi ''\sum\limits_{i = 1}^n {F^{i\bar i }|u_i|^2}+ \frac{1}{2}\varphi '\sum\limits_{i = 1}^n
   {F^{i\bar i } \lambda_i^2 }\\
    &+\left( {-\psi '+\frac{1}{2}\varphi ' -22n^2 C_2  - 4C_3 } \right)\sum\limits_{i = 1}^n
   {F^{i\bar i } }  - C_1  \notag\\
  =&A_1+A_2+A_3\notag\\&+A_4+A_5+A_6+\left( {-\psi '+\frac{1}{2}\varphi ' -22n^2 C_2  - 4C_3 } \right)\sum\limits_{i = 1}^n
   {F^{i\bar i } }  - C_1  \notag,
 \end{align}
 where $C_1$ is a positive constant depending only on $C_0$, $\sup{e^{\frac{f}{k}}}$, and $\sup\left|\nabla \left(e^{\frac{f}{k}}\right)\right|^2$ , and $\sup\left|\partial\bar\partial \left(e^{\frac{f}{k}}\right)\right|$.

Let $\varepsilon  = \frac{\delta}{4}\le \frac{1}{16}$ and $\delta=\frac{1}{2A + 1}$ , where $A=2L(C_0+1)$ and $C_0=31n^2C_2+4C_3$. We divide two cases to drive the estimate, which is similar as \cite{HMW}.

 \textbf{Case1:} $\lambda_n < -\varepsilon\lambda_1.$

 By the first derivative's condition \eqref{firstordercondition} , we have

\begin{align*}
- (1 + 2c_0 )^2 \left| {\frac{{u_{1\bar 1 i} }}{{w_{1\bar 1 } }}} \right|^2  &=  - \left| {\varphi '|\nabla u|_i^2  + \psi 'u_i } \right|^2 \ge  - 2\left( {\varphi '} \right)^2 |\nabla u|_i^2 |\nabla u|_{\bar i }^2  - 2\left( {\psi '} \right)^2 |u_i |^2 \notag \\
&=  - \varphi ''|\nabla u|_i^2 |\nabla u|_{\bar i }^2  - 2\left( {\psi '} \right)^2 |u_i |^2,1\leq i\leq n
\end{align*}

  \begin{align*}
 A_2  &= 2(1 + 2c_0 )\sum\limits_{i \ne 1} {F^{i\bar i } {\mathop{\rm Re}\nolimits} \left( {\frac{{T_{1i}^1 u_{1\bar 1 \bar i } }}{{w_{1\bar 1 } }}} \right)}  \\
  &\ge  - c_0 \sum\limits_{i \ne 1} {F^{i\bar i } \left| {\frac{{u_{1\bar 1 \bar i } }}{{w_{1\bar 1 } }}} \right|^2  - \frac{{(1 + 2c_0 )^2 }}{{c_0 }}\sum\limits_{i \ne 1} {F^{i\bar i } \left| {T_{1i}^1 } \right|^2 } }\\
  &\ge  - c_0 \sum\limits_{i \ne 1} {F^{i\bar i } \left| {\frac{{u_{1\bar 1 \bar i } }}{{w_{1\bar 1 } }}} \right|^2  - 9n^2C_2\sum\limits_{i \ne 1} {F^{i\bar i } \left| {T_{1i}^1 } \right|^2 } }
  \end{align*}
Thus
  \begin{align*}
 A_2  + A_3  &\ge  - (1 + 3c_0 )\sum\limits_{i = 1}^n {\frac{{F^{i\bar i } |u_{1\bar 1 i} |^2 }}{{w_{1\bar 1 } ^2 }}}  - 9n^2C_2\sum\limits_{i \ne 1} {F^{i\bar i } \left| {T_{1i}^1 } \right|^2 }  \\
  &\ge  - (1 + 2c_0 )^2 \sum\limits_{i = 1}^n {\frac{{F^{i\bar i } |u_{1\bar 1 i} |^2 }}{{w_{1\bar 1 } ^2 }}}  - 9n^2C_2 \sum\limits_{i = 1}^n {F^{i\bar i } }  \\
  &=  - A_4  - 2\left( {\psi '} \right)^2 \sum\limits_{i = 1}^n {F^{i\bar i } |u_i |^2 }  - 9n^2C_2 \sum\limits_{i = 1}^n {F^{i\bar i } }.
  \end{align*}

 We therefore obtain
 \begin{align}\label{A_234}
A_2  + A_3  + A_4  \ge  - 2\left( {\psi '} \right)^2 \sum\limits_{i = 1}^n
{F^{i\bar i } |u_i |^2 }  - 9n^2C_2 \sum\limits_{i = 1}^n {F^{i\bar i } }.
 \end{align}


Using the following inequlity
 \begin{align*}
\sum\limits_{i = 1}^n {F^{i\bar i} \lambda _i ^2 }  \ge F^{n\bar n} \lambda _n ^2  > \varepsilon^2 F^{n\bar n} \lambda _1 ^2 \ge \frac{\varepsilon^2}{n}\sum\limits_{i = 1}^n {F^{i\bar i} } \lambda _1 ^2.
\end{align*}
Therefore, we have
\begin{align}\label{A_6}
A_6=\frac{1}{2}\varphi '\sum\limits_{i = 1}^n {F^{i\bar i} \lambda _i ^2 }
\ge \frac{\varepsilon^2}{2n}\varphi '\sum\limits_{i = 1}^n {F^{i\bar i} } \lambda _1 ^2
 \end{align}

  Combining \eqref{FQ} and \eqref{A_234} \eqref{A_6}, we obtain
 \begin{align*}
 0 \ge& \sum\limits_{i = 1}^n {F^{i\bar i} Q_{i\bar i} }  \ge  \frac{\varepsilon^2}{2n}\varphi '\sum\limits_{i = 1}^n {F^{i\bar i} } \lambda _1 ^2   -2\left( {\psi '} \right)^2\sum\limits_{i = 1}^n {F^{i\bar i}|u_i |^2 }  \\
  &+ \left( {-\psi '+\frac{1}{2}\varphi '  -31n^2C_2  -4C_3   } \right)\sum\limits_{i = 1}^n {F^{i\bar i} }  - C_1  \\
&\geq \left(\frac{\varepsilon^2}{8nK}\lambda_1^2-8K(C_0  + 1)^2\right)
\sum\limits_{i = 1}^n{F^{i\bar i }}- C_1\\
&\geq \frac{\varepsilon^2}{8nK}\lambda_1^2-8K(C_0  + 1)^2- C_1,
 \end{align*}
 where we have used the fact that $ \sum\limits_{i = 1}^n{F^{i\bar i }}\geq 1$, which follows from Newton-Maclaurin's inequality and the fact that .

Hence,we obtain the estimates
\begin{center}
$\lambda_1\leq 8\sqrt{2}(2A+1)\sqrt{nK(8K(C_0  + 1)^2+ C_1)}\le CK .$
\end{center}

\textbf{Case2:} $\lambda_n > -\varepsilon\lambda_1$.

Let
\[
I = \left\{ {i \in \left\{ {1, \cdots ,n} \right\}\left| {\sigma _{k - 1} \left( {\lambda \left| i \right.} \right) \ge \varepsilon ^{ - 1} \sigma _{k - 1} \left( {\lambda \left| 1 \right.} \right)} \right.} \right\}
\]
Obviously, $1 \notin I $ and $i\in I$ if and only if
\[
F^{i\bar i}  > \varepsilon ^{ - 1} F^{1\bar 1}
\]
We first treat those indices which are not in $ I $: by the first derivative's condition \eqref{firstordercondition}, we have
\begin{align*}
 -(1 + 2c_0 )& \sum\limits_{i \notin I}{\frac{{F^{i\bar i } |u_{1\bar 1 i} |^2 }}{{w_{1\bar 1 } ^2 }}}+2(1+2c_0)\sum\limits_{i \notin I}{F^{i\bar i} Re\frac{ T_{1i}^1 u_{1\bar 1 \bar i}}{w_{1\bar 1} }}\\
 &\ge  -(1 + 2c_0 )^2 \sum\limits_{i \notin I}{\frac{{F^{i\bar i } |u_{1\bar 1 i} |^2 }}{{w_{1\bar 1 } ^2 }}}-\frac{(1 + 2c_0 )^2}{{c_0}} \sum\limits_{i \notin I}{{F^{i\bar i } |T_{1i}^1 |^2 }}\\
&=-\varphi''\sum\limits_{i\notin I}{F^{i\bar i}{|\nabla u|^2}_i{|\nabla u|^2}_{\bar i}}-2(\psi')^2\sum\limits_{i\notin I}{F^{i\bar i}|u_i|^2} -9n^2C_2 \sum\limits_{i \notin I}{{F^{i\bar i } |T_{1i}^1 |^2 }}\\
&\ge -\varphi''\sum\limits_{i\notin I}{F^{i\bar i}{|\nabla u|^2}_i{|\nabla u|^2}_{\bar i}}-2\varepsilon ^{ - 1}K(\psi')^2F^{1\bar 1} -9n^2C_2 \sum\limits_{i =1}^n{F^{i\bar i }}
\end{align*}
Substitute the above inequality into \eqref{FQ}
\begin{align}\label{Case2}
0 \ge F^{i\bar i } Q_{i\bar i }  \ge&
 - (1 + 2c_0 )\sum\limits_{i,j,p,q = 1}^n {\frac{{F^{i\bar j ,p\bar q } u_{i\bar j  1} u_{p\bar q \bar 1} }}{{w_{1\bar 1 } }}}  + 2(1 + 2c_0 )\sum\limits_{i \in I } {F^{i\bar i } } {\mathop{\rm Re}\nolimits} \left( {\frac{{T_{1i}^1 u_{1\bar 1 \bar i } }}{{w_{1\bar 1 } }}} \right) \\
  &- (1 + 2c_0 )\sum\limits_{i \in I} {\frac{{F^{i\bar i }
  |u_{1\bar 1 i} |^2 }}{{w_{1\bar 1 } ^2 }}}+ \varphi ''\sum\limits_{i \in I} {F^{i\bar i } |\nabla u} |_i^2 |\nabla u|_{\bar i }^2
  + \psi ''\sum\limits_{i = 1}^n {F^{i\bar i } |u_i } |^2\notag
  \\
      &+ \frac{1}{2}\varphi '\sum\limits_{i = 1}^n
   {F^{i\bar i } \lambda_i^2 }
   -2\varepsilon ^{ - 1}K(\psi')^2F^{1\bar 1}+ \left( {-\psi '+\frac{1}{2}\varphi ' -31n^2C_2  - 4C_3 } \right)\sum\limits_{i = 1}^n
   {F^{i\bar i } }  - C_1  \notag\\
   &=B_1+B_2+B_3+B_4+B_5\notag\\
   &+B_6+B_7+B_8\notag
 \end{align}
 Firstly, we have
 \label{B67}\[
 B_6+B_7=\frac{1}{2}\varphi '\sum\limits_{i = 1}^n
   {F^{i\bar i } \lambda_i^2 }
   -2\varepsilon ^{ - 1}K(\psi')^2F^{1\bar 1}\ge\frac{1}{4}\varphi '\sum\limits_{i = 1}^n
   {F^{i\bar i } \lambda_i^2 },
 \]
 where we have assumed $\frac{1}{4}\varphi '
   {F^{1\bar 1 } \lambda_1^2 }\ge
   2\varepsilon ^{ - 1}K(\psi')^2F^{1\bar 1} $ otherwise we have $\frac{1}{4}\varphi '
   {F^{1\bar 1 } \lambda_1^2 }\le
   2\varepsilon ^{ - 1}K(\psi')^2F^{1\bar 1} $ i.e. $\lambda_1\leq CK$ and the estimate is done.

We next use the first term $B_1$ to cancel  the other terms containing the third derivatives of $u$ .
By the same proof as in \cite{HMW} P559, we have
\[
\lambda_1\sigma _{k - 2} \left( {\lambda \left| {1i} \right.} \right) \ge \left( {1 - 2\varepsilon } \right)\sigma _{k - 1} \left( {\lambda \left| i \right.} \right), \text{for}\  i \in I.
\]
Thus \begin{align*}
  - \lambda _1 F^{i\bar 1,1\bar i}  = \frac{{F^{1 - k} }}{k}\lambda _1 \sigma _{k - 2} \left( {\lambda \left| {1i} \right.} \right) \ge \frac{{F^{1 - k} }}{k}\left( {1 - 2\varepsilon } \right)\sigma _{k - 1} \left( {\lambda \left| i \right.} \right) = \left( {1 - 2\varepsilon } \right)F^{i\bar i}  \\
 \end{align*}
Since
\begin{align*}
 u_{i\bar 11}  = u_{1\bar 1i}  - T_{1i}^1 \left( {\lambda _1  - 1} \right) \\
 \end{align*}
Therefore
 \begin{align*}
  B_1=- \frac{{{\rm{1 + 2c}}_{\rm{0}} }}{{\lambda _1 }}\sum\limits_{i,j,p,q = 1}^n {F^{i\bar j,p\bar q} } u_{i\bar j1} u_{p\bar q\bar 1}  \ge  - \frac{{{\rm{1 + 2c}}_{\rm{0}} }}{{\lambda _1 ^2 }}\sum\limits_{i\in I} {\lambda _1 F^{i\bar 1,1\bar i} } u_{i\bar 11} u_{1\bar i\bar 1}  \\
   \ge \frac{{{\rm{1 + 2c}}_{\rm{0}} }}{{\lambda _1 ^2 }}\left( {1 - 2\varepsilon } \right)\sum\limits_{i \in I} {F^{i\bar i} \left| {u_{1\bar 1i}  - T_{1i}^1 \left( {\lambda _1  - 1} \right)} \right|} ^2  \\
 \end{align*}

 \begin{align*}
B_2  = \frac{{{\rm{2}}\left( {{\rm{1 + 2c}}_{\rm{0}} } \right)}}{{\lambda _1 }}\sum\limits_{i \in I} {F^{i\bar i} {\mathop{\rm Re}\nolimits} \left( {T_{1i}^1 u_{1\bar 1\bar i} } \right)}
\end{align*}

From  the first derivative's condition \eqref{firstordercondition}, we have
 \begin{align*}
B_4= \varphi ''\sum\limits_{i \in I} {F^{i\bar i }
 |\nabla u} |_i^2 |\nabla u|_{\bar i }^2  =& 2\sum\limits_{i \in I}
 {F^{i\bar i} \left|{(1 + 2c_0 )\frac{{u_{1\bar1\bar i}}}
 {{w_{1\bar 1 } }} + \psi 'u_i } \right|} ^2 \\
 \ge& 2(1 + 2c_0 )^2 \delta \sum\limits_{i \in I} {F^{i\bar i }
  \frac{{\left| {u_{1\bar 1 \bar i } } \right|}}
  {{w_{_{1\bar 1 } }^2 }}} ^2  - \frac{{2\delta }}
  {{1 - \delta }}\left( {\psi '} \right)^2 \sum\limits_{i \in I}
  {F^{i\bar i}|u_i } |^2  \\
  \ge& 2(1 + 2c_0 )^2 \delta \sum\limits_{i \in I} {F^{i\bar i }
  \frac{{\left| {u_{1\bar 1 \bar i } } \right|}}
  {{w_{_{1\bar 1 } }^2 }}} ^2  -   B_5,
 \end{align*}
 where we have used $\frac{{2\delta }}{{1 - \delta }}\left( {\psi '} \right)^2=\psi''$ by our choosing $\delta=\frac{1}{2A+1}$.

 Thus we have
\begin{align*}
B_3  + B_4+ B_5     \ge  - \left( {{\rm{1 + 2c}}_{\rm{0}} } \right)\frac{{\left[ {1 - 2\left( {{\rm{1 + 2c}}_{\rm{0}} } \right)\delta } \right]}}{{\lambda _1 ^2 }}\sum\limits_{i \in I } {F^{i\bar i} } \left| {u_{1\bar 1\bar i} } \right|^2.
\end{align*}
Therefore,
\begin{align*}
 & B_1  + B_2  + B_3  + B_4 + B_5   \\
   \ge& \frac{{{\rm{1 + 2c}}_{\rm{0}} }}{{\lambda _1 ^2 }}\left( {1 - 2\varepsilon } \right)\sum\limits_{i \in I} {F^{i\bar i} \left| {u_{1\bar 1i}  - T_{1i}^1 \left( {\lambda _1  - 1} \right)} \right|^2 }
  - \left( {{\rm{1 + 2c}}_{\rm{0}} } \right)\frac{{\left[ {1 - 2\left( {{\rm{1 + 2c}}_{\rm{0}} } \right)\delta } \right]}}{{\lambda _1 ^2 }}\sum\limits_{ i \in I} {F^{i\bar i} } \left| {u_{1\bar 1\bar i} } \right|^2 \\
   &+ \frac{{{\rm{2}}\left( {{\rm{1 + 2c}}_{\rm{0}} } \right)}}{{\lambda _1 ^2 }}\sum\limits_{i \in I} {F^{i\bar i} {\mathop{\rm Re}\nolimits} \left( {\lambda _1 T_{1i}^1 u_{1\bar 1\bar i} } \right)}  \\
  =& \frac{{{\rm{1 + 2c}}_{\rm{0}} }}{{\lambda _1 ^2 }}\sum\limits_{i \in I} {F^{i\bar i} \left\{ {\left( {1 - 2\varepsilon } \right)\left| {u_{1\bar 1i}  - T_{1i}^1 \left( {\lambda _1  - 1} \right)} \right|^2  - \left( {1 - 2\left( {{\rm{1 + 2c}}_{\rm{0}} } \right)\delta } \right)\left| {u_{1\bar 1i} } \right|^2  + 2{\mathop{\rm Re}\nolimits} \left( {\lambda _1 T_{1i}^1 u_{1\bar 1\bar i} } \right)} \right\}}  \\
  =& \frac{{{\rm{1 + 2c}}_{\rm{0}} }}{{\lambda _1 ^2 }}\sum\limits_{i \in I} {F^{i\bar i} \left\{ {\left( {2\left( {{\rm{1 + 2c}}_{\rm{0}} } \right)\delta  - 2\varepsilon } \right)\left| {u_{1\bar 1i} } \right|^2  + 2\left[ {2\varepsilon \left( {\lambda _1  - 1} \right) + 1} \right]{\mathop{\rm Re}\nolimits} \left( {T_{1i}^1 u_{1\bar 1\bar i} } \right) + \left( {1 - 2\varepsilon } \right)\left( {\lambda _1  - 1} \right)^2 \left| {T_{1i}^1 } \right|^2 } \right\}}  \\
  &\ge 0,
 \end{align*}
where the last inequality holds
 if we choose $\varepsilon  = \frac{\delta}{4}\le \frac{1}{16}$ . In fact,
\begin{align*}
\Delta= B^2  - 4AC =& 4\left[ {2\varepsilon \left( {\lambda _1  - 1} \right) + 1} \right]^2  - 4\left( {1 - 2\varepsilon } \right)\left( {\lambda _1  - 1} \right)^2 \left( {2\left( {{\rm{1 + 2c}}_{\rm{0}} } \right)\delta  - 2\varepsilon } \right) \\
  &\le 36\varepsilon ^2 \left( {\lambda _1  - 1} \right)^2  - 4\left( {1 - 2\varepsilon } \right)\left( {\lambda _1  - 1} \right)^2\left( {2\left( {{\rm{1 + 2c}}_{\rm{0}} } \right)\delta  - 2\varepsilon } \right) \\
  &\le 4\left( {\lambda _1  - 1} \right)^2 \left( {9\varepsilon ^2  - 2\left( {1 - 2\varepsilon } \right)\left( {\left( {{\rm{1 + 2c}}_{\rm{0}} } \right)\delta   } \right)} + 2\varepsilon \left( {1 - 2\varepsilon } \right)\right) \\
  &\le 4\left( {\lambda _1  - 1} \right)^2 \left( 5\varepsilon ^2 + 2\varepsilon   -{\delta   } \right) \\
  &\le4\left( {\lambda _1  - 1} \right)^2 \left(  4\varepsilon   -{\delta   } \right) \\
  &=0.
 \end{align*}
Thus we finally obtain
\begin{align*}
 0 \ge& \frac{1}{4}\varphi '\sum\limits_{i = 1}^n
   {F^{i\bar i } \left| {u_{i\bar i } } \right|^2 }
    +\left( {-\psi '+\frac{1}{2}\varphi ' - C_2  - C_3 } \right)\sum\limits_{i = 1}^n
   {F^{i\bar i } }  - C_1\\
  =&  \left( {-\psi '+\frac{1}{2}\varphi ' - C_2  - C_3 } \right)\sum\limits_{i = 1}^n {F^{i\bar i} }  + \frac{1}{2}\varphi '\sum\limits_{i = 1}^n {F^{i\bar i} \left| {u_{i\bar i} } \right|^2 }  - C_1  \\
  \ge& \sum\limits_{i = 1}^n {F^{i\bar i} }  + \frac{1}{16K}\sum\limits_{i = 1}^n {F^{i\bar i} {\lambda _i}^2 }  - C_1  \\
 \end{align*}
, where we have used $-\psi'\ge C_0+1$ by choosing  $C_0= 31n^2C_2  + 4C_3$ .

In particular, we have $\sum\limits_{i = 1}^n {F^{i\bar i} }  \le C$.
By lemma$2.2$ in \cite{HMW} we have $ F^{1\bar 1}   \ge \frac {c(n,k)}{C_1^{k-1}} $, where $c(n,k)$ is a positive constant depending only on $n$ and $k$.

Therefore, we get the desired estimate:
\begin{align*}
\lambda _1  \le  \frac{4C_1^{\frac{k}{2}}}{c(n,k)^{\frac{1}{2}}}\sqrt K,
\end{align*}
where $C_1$ is given in \eqref{FQ}.
\end{proof}

\end{document}